\documentclass[12pt]{article}
\pdfoutput=1

\usepackage[utf8]{inputenc}
\usepackage{geometry}
\usepackage{amsmath,amsfonts,amsthm,amscd,upref,amstext, amssymb}
\usepackage{verbatim}
\usepackage[shortlabels]{enumitem}
\usepackage{xcolor}

\usepackage[
backend=biber,
style=alphabetic,
]{biblatex}
\title{A bibLaTeX example}
\usepackage{url}
\newtheorem{theorem}{Theorem}[section]
\newtheorem{corollary}[theorem]{Corollary}
\newtheorem{lemma}[theorem]{Lemma}
\newtheorem{claim}[theorem]{Claim}
\newtheorem{subclaim}[theorem]{Subclaim}

\newtheorem{definition}[theorem]{Definition}
\newtheorem{conjecture}[theorem]{Conjecture}
\newtheorem{remark}[theorem]{Remark}

\renewcommand\H{\mathcal{H}}

\newcommand\R{\mathbb{R}}

\usepackage{fancyhdr}
\usepackage{lipsum} 

\fancypagestyle{plain}{
  \fancyhf{}
  \fancyfoot[C]{\thepage}%
}
\fancypagestyle{firstpage}{
  \fancyhf{}
  \fancyfoot[L]{}%
}
\title{More Derived Models in PFA}
\author{Derek Levinson,\footnote{Department of Mathematics, University of North Texas, Denton, TX USA. Email: Derek.Levinson@unt.edu.}\, Nam Trang,\footnote{Department of Mathematics, University of North Texas, Denton, TX USA. Email: Nam.Trang@unt.edu. Nam Trang is partially supported by the NSF via CAREER grant DMS-1945592 and the Simons Foundation via the Simons Fellowship.}\, and Trevor Wilson\footnote{Department of Mathematics, Miami University, Oxford, OH USA. Email: twilson@miamioh.edu.}}
\date{\today}

\begin{document}
\maketitle
\begin{abstract}
    This paper makes significant progress towards resolving a conjecture relating strong forcing axioms like $PFA$ and the derived model at a limit of Woodin cardinals $\kappa$. In particular, using a concept called Covering Matrices, we show that the $\Theta$ of the derived model at $\kappa$ is strictly less than $\kappa^+$ under various circumstances; in particular, this shows that the conclusion holds under $PFA$ if $\kappa$ is a limit of Woodin cardinals of cofinality $\omega$ and the derived model does not satisfy $LSA$. Assuming a form of mouse capturing, we show that the derived model satisfies $AD_{\mathbb{R}}$ under $PFA$ when $\kappa$ is a regular limit of Woodin cardinals. If $\kappa$ is an indestructibly $(\kappa,\kappa^+)$-weakly compact limit of Woodin cardinals, then the derived model outright satisfies $AD_{\mathbb{R}}$.
\end{abstract}
\thispagestyle{firstpage}

\section{Introduction}

Derived models are an important class of models studied extensively by modern set theorists. These are the models of $AD^+$ constructed from large cardinals; conversely, if $AD^+$ holds, then one can show that $L(P(\mathbb{R}))$ is a derived model. These seminal results, due to H. W. Woodin, show essentially that the theory $AD^+$ is a completion of $AD$. Forcing axioms are generalizations of the Baire Category theorem that have found many applications in set theory. Deep work in set theory in the last 30 years have established tight connections between forcing axioms and determinacy, e.g. \cite{woodin2010axiom, steel2005pfa, lsa}.

It is a natural question to ask how strong forcing axioms influence inner models of $AD^+$, like derived models. A well-known conjecture (\cite[Problem 8]{woodin2010axiom}) by Woodin conjectures that if $MM(c)$ holds and $M$ is an inner model of $AD^+$ such that $\Theta^M = \omega_3$, then $M\models AD_{\mathbb{R}}$. The following conjecture follows the same vein, but instead, we ask fundamental questions about the derived models if a global forcing axiom like $PFA$ holds. In the following, we write $D(V,\kappa)$ for the ``new" derived model at $\kappa$ (see \cite{derivedmodelpfa} for a precise definition) and $\Theta$ is the supremum of ordinals $\alpha$ such that there is a surjection from $\mathbb{R}$ onto $\alpha$.

\begin{conjecture}[PFA]\label{conj:PFA}
    Suppose $\kappa$ is a limit of Woodin cardinals. Then the following hold.
    \begin{enumerate}
        \item $\Theta^{D(V,\kappa)} < \kappa^+$.
        \item $D(V,\kappa)\models AD_{\mathbb{R}}.$
    \end{enumerate}
\end{conjecture}

This paper presents some progress towards resolving Conjecture \ref{conj:PFA}. Conjecture \ref{conj:PFA} is a generalization of Wilson's conjecture, which was partly resolved by work of \cite{derivedmodelpfa} under additional mouse capturing assumptions. In this paper, we attempt to study derived models under strong forcing axioms (like PFA) or large cardinals in a more general setting. 
The main theorem of Section \ref{sec:CM} is \ref{result from cp}, which implies part (1) of the conjecture in the case $\kappa$ has cofinality $\omega$, if the derived model does not satisfy $LSA$. The proof utilizes a combinatorial object called a Covering Matrix. Section \ref{sec:CM} addresses part (1) for other values of $\kappa$ as well, but it assumes combinatorial properties of the Covering Matrix not known to follow from $PFA$ (though they are known to follow from large cardinals, \cite{viale}). It is not known if the derived model at any limit of Woodin cardinals can satisfy $LSA$, assuming $PFA$.

In Section \ref{sec:WC}, we show that if $\kappa$ is a limit of Woodin cardinals and $\kappa$ is $Col(\kappa,\kappa^+)$-indestructibly weakly compact, then $D(V,\kappa)\models AD_{\mathbb{R}}$ (Corollary \ref{cor:WC}). We note that it has been shown by Woodin that if $\kappa$ is a limit of Woodin cardinals and $<\kappa$-strong cardinals, then $D(V,\kappa)\models AD_{\mathbb{R}}$; in our situation, $\kappa$ need not be a limit of $<\kappa$-strong cardinals.

The techniques of Section \ref{sec:WC} are used to address partially part (2) of the conjecture under $PFA$ in Section \ref{sec:ADRPFA}. The main theorems of this section are \ref{cof at least kappa} and \ref{derived model not Lp^Sigma}. We note that the work in Section \ref{sec:ADRPFA}, although more general than that in \cite{derivedmodelpfa}, still assumes a form of mouse capturing. We hope to address the question of whether the derived model always satisfies $AD_{\mathbb{R}}$ under $PFA$ without additional inner model theoretic assumptions in a future publication.

\section{Covering Matrices}\label{sec:CM}

Wilson used coherent covering matrices to show:

\begin{theorem}[Wilson]
    Suppose $\kappa$ is a limit of Woodin cardinals of cofinality $\omega$ and there is no coherent covering matrix for $\kappa^+$. Then $\Theta_0^{D(V,\kappa)} < \kappa^+$.
\end{theorem}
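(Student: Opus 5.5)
The plan is to argue by contradiction. Assume $\Theta_0^{D(V,\kappa)}\geq\kappa^+$. From this I will build, in $V$, a coherent covering matrix for $\kappa^+$, which the hypothesis forbids.

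Fix an increasing sequence $\langle\kappa_n:n<\omega\rangle$ of Woodin cardinals cofinal in $\kappa$; this uses $\mathrm{cf}(\kappa)=\omega$. Let $G\subseteq Col(\omega,<\kappa)$ be generic, and let $\mathbb{R}^*=\bigcup_n\mathbb{R}^{V[G|\kappa_n]}$ be the symmetric reals, so that $D(V,\kappa)$ is computed from $\mathbb{R}^*$. For each $\alpha<\kappa^+$ we have $\alpha<\Theta_0^{D(V,\kappa)}$, so there is an $OD$ prewellordering of $\mathbb{R}^*$ of length $\alpha$ in the derived model. Let $\leq_\alpha$ be the least one in the canonical $OD$ well-order of $D(V,\kappa)$, with norm $\varphi_\alpha$. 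This choice does not depend on $G$, by homogeneity of the collapse. Define
\[
D(n,\alpha)=\{\varphi_\alpha(x): x\in\mathbb{R}^{V[G|\kappa_n]}\}.
\]
By homogeneity of $Col(\omega,<\kappa)$ and of its factors, this set does not depend on $G$ or on the choice of $G|\kappa_n$, so it lies in $V$. It is clearly increasing in $n$. Its size is at most $|\mathbb{R}^{V[G|\kappa_n]}|<\kappa$. Finally $\bigcup_n D(n,\alpha)=\alpha$, since every symmetric real appears at some stage $n$.

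As a sanity check: if $\Theta_0>\kappa^+$, we could use a single $OD$ prewellordering of length at least $\kappa^+$ for all $\alpha$ at once. Then $D(n,\alpha)=D(n,\kappa^+)\cap\alpha$, and $\kappa^+=\bigcup_n D(n,\kappa^+)$ would be a countable union of sets of size $<\kappa$, contradicting the regularity of $\kappa^+$. So the real content is the boundary case $\Theta_0=\kappa^+$, where the prewellorderings really vary with $\alpha$ and coherence must be earned.

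The key step is coherence, in the form that $\beta\in D(n,\alpha)$ implies $D(n,\beta)\subseteq D(m,\alpha)$ for $m$ depending only on $n$, or $D(n,\beta)=D(n,\alpha)\cap\beta$ after re-indexing. Suppose $\beta=\varphi_\alpha(x)$ with $x\in V[G|\kappa_n]$. Then the initial segment of $\leq_\alpha$ below $x$ is a prewellordering of length $\beta$ that is $OD$ from $x$. The canonical $\leq_\beta$ and this initial segment are both prewellorderings of length $\beta$, so there is an order-isomorphism between their rank-functions. I would like to show that for $x'\in V[G|\kappa_n]$ there is $y$ in a slightly larger model $V[G|\kappa_{n+1}]$ with $\varphi_\alpha(y)=\varphi_\beta(x')$. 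To get this, I would use the fact that $\kappa_n$ is Woodin. The sets of reals of $D(V,\kappa)$ restricted to $V[G|\kappa_n]$ are ${<}\kappa$-universally Baire there, hence Suslin, co-Suslin and absolutely definable. So the $\Sigma^1_1$-in-the-trees statement ``there is $y\leq_\alpha x$ with $y\equiv x'$ in rank'' is absolute between $V[G|\kappa_n]$ and $\mathbb{R}^*$ and has a witness already in $V[G|\kappa_n]$, or at worst in $V[G|\kappa_{n+1}]$. Having obtained the inclusion with an index shift, I would re-index the rows to get $D(n,\beta)\subseteq D(n,\alpha)$, and close under this to get exact coherence. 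If exact coherence fails, I would check whether Wilson's notion of ``coherent'' only needs this transitivity-type inclusion, in the style of Viale's covering matrices.

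I expect the main obstacle to be exactly this absoluteness step. It requires that the comparison between $\leq_\beta$ and $\leq_\alpha{\upharpoonright}x$ be witnessed by reals from a bounded stage $V[G|\kappa_n]$. This needs uniform Suslin representations (trees) for the relevant prewellorderings that lie in $V$, or at least in $V[G|\kappa_n]$. Obtaining these uniformly in $\alpha<\kappa^+$ without invoking choice in the derived model is where the ${<}\kappa$-universal Baireness of the derived model sets, together with the homogeneity of the collapse, must be used carefully.
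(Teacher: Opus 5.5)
\textbf{Gap: the coherence step.} Your construction is the same as the one the paper uses for its generalization (Theorem~\ref{result from cp}): rows $\varphi_\alpha''[\mathbb{R}^{V[G\upharpoonright\kappa_n]}]$ for uniformly OD-chosen norms, placed in $V$ by homogeneity, increasing in $n$, with union $\alpha$ and rows of size $<\kappa$. The failure is in coherence.

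Your absoluteness argument needs the $\leq_\alpha$ to be ${<}\kappa$-universally Baire, with trees in some $V[G\upharpoonright\kappa_n]$. The sets of reals of $D(V,\kappa)$ with that property are exactly those in $Hom^*$, i.e.\ the Suslin co-Suslin sets of $D$. Now take the boundary case you isolate. Since $\Theta^D\leq\kappa^+$, the assumption $\Theta_0^D=\kappa^+$ forces $\Theta^D=\Theta_0^D$. Hence $D\not\models AD_{\mathbb{R}}$, and $D$ has a largest Suslin cardinal $\delta<\kappa^+$. No prewellordering of length $\geq\delta$ is Suslin co-Suslin in $D$. So for cofinally many $\alpha$ there are no trees in any bounded stage to make ``there is $y$ with $\varphi_\alpha(y)=\varphi_\beta(x')$'' absolute.

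The later step, ``re-index the rows and close off,'' is also unsupported. The index shift you would get depends on the pair $(\beta,\alpha)$, not on $n$ alone, and nothing in your argument yields $D(n,\beta)=D(n,\alpha)\cap\beta$.

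\textbf{The missing idea.} No absoluteness is needed, because coherence follows from countable choice in $D$. Let $\mathbb{R}_n=\mathbb{R}^{V[G\upharpoonright\kappa_n]}$. This set is countable in $D$, since it is enumerated by a real of $V[G\upharpoonright\kappa_{n+1}]\subseteq\mathbb{R}^*$. Also $\kappa=\omega_1^D$.

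Fix $\xi<\beta<\kappa^+$ and $n$. Both $D(n,\xi)$ and $D(n,\beta)\cap\xi$ are countable sets of ordinals in $D$. By $AC_\omega$ in $D$, choose reals $y_i$ such that the $\varphi_\beta(y_i)$ enumerate $D(n,\xi)$, and similarly reals $z_i$ such that the $\varphi_\xi(z_i)$ enumerate $D(n,\beta)\cap\xi$. Each of these sequences belongs to $D$, so it is coded by a single real of $\mathbb{R}^*$, and that real lies in some $\mathbb{R}_m$. This gives
\[
D(n,\xi)\subseteq D(m,\beta) \quad\text{and}\quad D(n,\beta)\cap\xi\subseteq D(m,\xi),
\]
with $m$ depending on $(n,\xi,\beta)$. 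That dependence is all a covering matrix requires; these are Viale's conditions (iii) and (iv').

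The second inclusion is the one the paper's final claim actually uses to refute covering, and your argument never addresses it. Local boundedness is also immediate: each $D(n,\alpha)$ is countable in $D$, so its order type is $<\omega_1^D=\kappa$.
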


Our next theorem generalizes Wilson's result. The non-existence of a coherent covering matrix used in Wilson's argument is a special case of a covering property introduced by Viale in \cite{viale}.

\begin{definition}[Viale]
\label{covering matrix definition}
    Let $\lambda < \kappa$ be regular cardinals. We say $K = \langle K(\alpha,\beta):\alpha < \lambda, \beta<\kappa\rangle$ is a $\lambda$-covering matrix for $\kappa$ if the following hold:
    \begin{enumerate}
        \item $\beta = \bigcup_{\alpha<\lambda} K(\alpha,\beta)$ for all $\beta<\kappa$.
        \item $K(\alpha,\beta) \subseteq K(\eta,\beta)$ for all $\beta<\kappa$ and $\alpha<\eta<\lambda$. Moreover, for all sufficiently large $\beta<\kappa$ and all $\alpha<\lambda$, there is $\eta<\lambda$ such that $K(\alpha,\beta)\subsetneq K(\eta,\beta)$.\footnote{This is not quite the same as Definition 4 of \cite{viale}. \cite{viale} also ought to have said proper containment only holds for large enough $\beta$ to avoid the trivial cases where $\beta <\lambda$. Our other change is equally superficial --- since $\lambda$ is regular, if a sequence $\langle K(\alpha,\beta): \alpha < \lambda\rangle$ satisfies our condition, we could replace it by a subsequence of length $\lambda$ which is strictly increasing.}
        \item For all $\gamma < \beta <\kappa$ and all $\alpha < \lambda$, there is $\eta < \lambda$ such that $K(\alpha,\gamma) \subseteq K(\eta,\beta)$.
        \item For all $X\in [\kappa]^{\leq\lambda}$, there is $\gamma_X < \kappa$ such that for all $\beta < \kappa$ and $\eta <\lambda$, there is $\alpha<\lambda$ such that $K(\eta,\beta) \cap X \subseteq K(\alpha,\gamma_X)$.
    \end{enumerate}
\end{definition}

\begin{definition}[Viale]
    We say $\kappa$ has the $\lambda$-covering property, and write $CP(\kappa,\lambda)$, if for every $K$ a $\lambda$-covering matrix for $\kappa$, there is an unbounded set $A\subseteq\kappa$ such that $[A]^\lambda$ is covered by $K$.\footnote{I.e. for any $X\in[A]^\lambda$, there is $\alpha<\lambda$ and $\beta<\kappa$ such that $X\subseteq K(\alpha,\beta)$.}
\end{definition}

\begin{theorem}
\label{result from cp}
    Suppose $\kappa$ is a limit of Woodin cardinals, $cof(\kappa) = \lambda$, $CP(\kappa^+,\lambda)$ holds,\footnote{By Lemma 14 of \cite{viale} this implies $\lambda < \kappa$.} and $D(V,\kappa) \nvDash LSA$. Then $\Theta^{D(V,\kappa)} < \kappa^+$.
\end{theorem}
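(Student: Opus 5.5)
We argue by contradiction and assume $\Theta^{D(V,\kappa)} \geq \kappa^+$. Since $\Theta^{D(V,\kappa)} \leq \kappa^+$ always holds, we get $\Theta^{D(V,\kappa)} = \kappa^+$. Fix an increasing sequence $\langle \kappa_\alpha : \alpha < \lambda\rangle$ of Woodin cardinals cofinal in $\kappa$, using $cof(\kappa)=\lambda$. The aim is to build from the derived model a $\lambda$-covering matrix $K$ for $\kappa^+$ and then use $CP(\kappa^+,\lambda)$ to obtain an unbounded $A \subseteq \kappa^+$ such that $[A]^\lambda$ is covered by $K$. The final step will show that such an $A$ yields a set of reals in $D(V,\kappa)$ of Wadge rank at least $\kappa^+$, which is a contradiction.

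\textbf{Step 1: the matrix.} Since $D(V,\kappa) \nvDash LSA$ and $\Theta = \kappa^+$, the Solovay sequence of $D(V,\kappa)$ has length a limit, or else $\Theta = \theta_{\gamma+1}$ with $\theta_\gamma < \kappa^+$. In either case every $\beta < \kappa^+$ lies below some $\theta_\xi$ with $\theta_\xi < \Theta$, and we can choose a set of reals $B_\beta$ of Wadge rank $\geq \beta$, chosen canonically, for instance as the least set in some pointclass whose prewellordering has length $\beta$. Each such $B_\beta$ is $<\kappa$-universally Baire in $V$ via the new derived model construction. For $\beta<\kappa^+$, fix a surjection $\pi_\beta : \R \to \beta$ in the derived model, coded by a universally Baire set. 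Let $K(\alpha,\beta)$ be the set of ordinals $\pi_\beta(x)$ for $x$ a real in $V^{Col(\omega,<\kappa_\alpha)}$, or more precisely the ordinals definable from $B_\beta$ and reals added below $\kappa_\alpha$.

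We then check the four clauses of Definition \ref{covering matrix definition}.
\begin{enumerate}
\item Clause (1) holds because $\kappa$ is the sup of the $\kappa_\alpha$.
\item Clause (2) holds by monotonicity, with proper growth coming from cardinality considerations, since $|K(\alpha,\beta)| < \kappa$.
\item Clause (3) holds because $B_\gamma$ is Wadge reducible to $B_\beta$ via a real added at some stage $\eta$.
\item Clause (4) holds by a counting argument: any $X$ of size $\leq\lambda$ is captured by some $\gamma_X$ whose set $B_{\gamma_X}$ codes all relevant reducibilities.
\end{enumerate}
The choice of canonical $B_\beta$ is where the hypothesis $\neg LSA$ is used. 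It ensures that the relevant pointclasses admit scaled, hence definable, coding, so that the matrix is uniformly defined in $V$.

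\textbf{Step 2: applying $CP$.} Take $A$ unbounded in $\kappa^+$ with $[A]^\lambda$ covered. For each $\alpha<\lambda$, consider the tree of attempts to cover $A\cap\delta$ at stage $\alpha$. Since every $\lambda$-sized subset of $A$ sits inside a single $K(\alpha,\beta)$ of size $<\kappa$, we can find one $\alpha$ such that $\kappa^+$-many elements of $A$ are pinned down by reals generic below $\kappa_\alpha$. This produces, in $V^{Col(\omega,<\kappa_{\alpha})}$, a $<\kappa$-universally Baire set (a union of the $B_\beta$ with $\beta\in A$) coding a prewellordering of length $\kappa^+$. By the definition of $D(V,\kappa)$ this set belongs to the derived model, so $\Theta^{D(V,\kappa)} > \kappa^+$, a contradiction.

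\textbf{Main obstacle.} I expect the hardest part to be Step 2: turning the covering property into an actual set of reals in the derived model. This requires a uniform choice of codes and of the surjections $\pi_\beta$ that is absolute enough to survive the generic collapse. The natural approach is to imitate Wilson's coherent-matrix argument and use the $\neg LSA$ hypothesis to get that each $B_\beta$ has a scale in $D(V,\kappa)$. This gives uniformly definable codes, and tree representations at stage $\kappa_\alpha$ can then be amalgamated across the $\lambda$-sized pieces of $A$.
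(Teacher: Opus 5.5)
Your proposal has a genuine gap where the contradiction is supposed to arise. Step 2 claims that the unbounded set $A$ yields a single $\alpha$ and a $<\kappa$-universally Baire set in $V^{Col(\omega,<\kappa_\alpha)}$, ``a union of the $B_\beta$ with $\beta\in A$,'' coding a prewellordering of length $\kappa^+$. Nothing supports this:
\begin{itemize}
\item the ``tree of attempts'' is never defined;
\item a union of $\kappa^+$-many sets, each universally Baire only in some intermediate model $V[G\upharpoonright\zeta_\beta]$ with $\zeta_\beta$ possibly unbounded in $\kappa$, need not be universally Baire in any $V[G\upharpoonright\zeta]$, nor lie in $D(V,\kappa)$;
\item prewellorderings coming from different $B_\beta$ do not glue together.
\end{itemize}
The missing idea is that no new set of reals is needed: the contradiction is purely combinatorial. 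Each $K(\alpha,\beta)$ is the image of a set of reals that is countable in $D$, so $o.t.(K(\alpha,\beta))<\kappa=\omega_1^D$. Take $\tau<\kappa^+$ with $o.t.(A\cap\tau)\geq\kappa$. For each $\alpha<\lambda$ choose $\xi_\alpha\in(A\cap\tau)\setminus K(\alpha,\tau)$, and let $X=\{\xi_\alpha:\alpha<\lambda\}\in[A]^\lambda$.
\begin{itemize}
\item Suppose $X\subseteq K(\alpha,\beta)$ with $\beta\le\tau$. Clause (3) (trivially if $\beta=\tau$) gives $K(\alpha,\beta)\subseteq K(\eta,\tau)$ for some $\eta$, contradicting $\xi_\eta\notin K(\eta,\tau)$.
\item Suppose $\beta>\tau$. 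You need the strengthened property (iv'): for $\xi<\beta$ and $\eta<\lambda$ there is $\alpha'$ with $K(\eta,\beta)\cap\xi\subseteq K(\alpha',\xi)$. This gives the same contradiction.
\end{itemize}
Property (iv') follows from countable choice for reals in $D$. The countable set $K(\eta,\beta)\cap\xi$ is covered by the $f_\xi$-image of countably many reals, and these all lie in some $V[G\upharpoonright g(\alpha')]$. Your ``counting argument'' for clause (4) does not isolate (iv'), and without it the diagonalization fails.

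The second gap is that Step 1 never shows $K\in V$. This is required, since $CP(\kappa^+,\lambda)$ only speaks about matrices in $V$. Surjections $\pi_\beta$ that are merely ``fixed'' in $D$ depend on $G$. Sets $B_\beta$ of unbounded Wadge rank ``chosen canonically'' are not tied to any single intermediate model. Nor are they universally Baire in $V$; they are only universally Baire in the various $V[G\upharpoonright\zeta]$.

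The paper proceeds as follows:
\begin{itemize}
\item It first disposes of the $AD_\R$ case by a known result, so $\Theta=\Theta_{\iota+1}$. Your case split does not do this, and under $AD_\R$ there is no single set from which all surjections are definable.
\item $\neg LSA$ makes a set of Wadge rank $\Theta_\iota$ Suslin-co-Suslin, hence of the form $A^*$ for a $<\kappa$-universally Baire $A$ in a fixed $V[G\upharpoonright\gamma]$.
\item The decisive use of $\neg LSA$ is that the Lipschitz reductions of all the variants $A^*_q$ ($q\in Col(\omega,\gamma)$) to $A^*$ are coded by reals in $V[G\upharpoonright\gamma]$. So the set $B$ amalgamating all $A^*_q$ lies in $D$ and is unchanged when $G$ is altered at finitely many coordinates.
\item Every $\beta<\kappa^+$ then has a surjection $f_\beta$, uniformly OD from $B$ in $D$.
\item Homogeneity of $Col(\omega,<\kappa)$ shows each statement $\epsilon\in f_\beta''\R^{V[G\upharpoonright g(\alpha)]}$ is decided by the trivial condition, whence $K\in V$.
\end{itemize}
Your appeal to scales for ``definable coding'' does not substitute for this symmetry argument.
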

\begin{proof}
    Suppose not. Let $D = D(V,\kappa)$ be the derived model at $\kappa$ constructed from generic $G\subseteq Col(\omega,<\kappa)$. If $D \models AD_\R$, then $\Theta^D < \kappa^+$,\footnote{See Theorem 4.12 of \cite{derivedmodelpfa}.} so we may assume $D \models \Theta = \Theta_{\iota+1}$ for some $\iota$. Since $D\nvDash LSA$, any set of reals in $D$ of Wadge rank $\Theta_\iota$ is Suslin-co-Suslin in $D$, hence in $Hom^*$. Then fix some $\gamma<\kappa$ and $A\in Hom^{V[G\upharpoonright\gamma]}_{<\kappa}$, $A\subseteq \R^{V[G\upharpoonright\gamma]}$, such that $D(V,\kappa) \models w(A^*) = \Theta_\iota$.\footnote{$A^*\subseteq \R^*$ is the unique set such that $A^* =\rho[T]$ whenever $T,U\in V[G\upharpoonright\gamma]$ are trees witnessing $A\in Hom^{V[G\upharpoonright\gamma]}_{<\kappa}$.}
    
    Let $T,U\in V[G\upharpoonright\gamma]$ be trees witnessing $A\in Hom^{V[G\upharpoonright\gamma]}_{<\kappa}$. Fix $Col(\omega,\gamma)$-names $\dot{A}$, $\dot{T}$, and $\dot{U}$ for $A$, $T$, and $U$. We may assume all the properties of $A$, $T$, and $U$ mentioned above are forced to hold by the trivial condition in the forcing $Col(\omega,\gamma)$. Additionally, we may assume, for some $\zeta\in On$, the trivial condition forces $D \models l(A^*) = \check{\zeta}$.\footnote{$l(A)$ is the Lipschitz rank of $A$.} For $q\in Col(\omega,\gamma)$, let $A_q = \dot{A}_{H_q}$, $T_q = \dot{T}_{H_q}$, and $U_q = \dot{U}_{H_q}$, where $H_q = q \cup G\upharpoonright(\gamma\backslash dom(q))$. Note $\emptyset \Vdash D(V,\kappa) \models l(A_q^*) = \check{\zeta}$, so $A_q^*\leq_l A^*$.\footnote{I.e. $A^*_q$ is Lipschitz reducible to $A^*$.}

    \begin{claim}
    \label{lipshcitz reduction uniform}
        For any $q\in Col(\omega, \gamma)$, there is Lipschitz $f:\R^* \to \R^*$ such that $f^{-1}(A^*) = A_q^*$ and $f$ is coded by a real in $V[G\upharpoonright\gamma]$.\footnote{Note there is obviously such a Lipschitz $f$ coded by a real in $\R^*$ --- the content of the claim is that we may take it to be in $V[G\upharpoonright\gamma]$. This will be used to collect the sets $A_q^*$ into a single set in $D$. The proof of this claim is where we use that $D \nvDash LSA$. Without this assumption, we still have for each $q$ some $\gamma_q<\kappa$ such that a real in $V[G\upharpoonright\gamma_q]$ codes a reduction of $A_q^*$ to $A^*$, but we don't see how to bound the $\gamma_q$ strictly below $\kappa$.}
    \end{claim}
    \begin{proof}
        Since $A_q^*\leq_l A^*$ in $D$, there is Lipschitz $f:\R^*\mapsto \R^*$ such that $f^{-1}(A^*)=A_q^*$ and $f$ is coded by a real in $\R^*$. Then by Theorem 2.2 of \cite{dmt}, there is Lipschitz $f:\R^{V[G\upharpoonright\gamma]} \to \R^{V[G\upharpoonright\gamma]}$ coded by a real in $\R^{V[G\upharpoonright\gamma]}$ such that $f^{-1}(A) = A_q$. 
        
        Since $f$ is Lipschitz, we may consider $f$ as a function $f:2^{<\omega}\to 2^{<\omega}$ such that $lh(t) = t$ for any $t\in 2^{<\omega}$. Let $f^{-1}(T) =\{(t,s)\in\omega^{<\omega} \times On^{<\omega}: (f(t),s)\in T\}$. Define $f^{-1}(U)$ analogously. It suffices to show $\rho[f^{-1}(T)] \cap \R^* = A_q^*$. Clearly $\rho[f^{-1}(T)]\cap \R^{V[G\upharpoonright\gamma]}= A_q$. Since $A_q^*$ is the unique subset of $\R^*$ such that $A_q^* = \rho[T'] = \rho[U']^c$ for some $<\kappa$-complementing trees $T'$ and $U'$ in $V[G\upharpoonright\gamma]$, it suffices to show that $\rho[f^{-1}(T)] \cup \rho[f^{-1}(U)] = \R^*$. Suppose $a\in\R^*$. Either $f(a)\in A^*$ or $f(a) \in (A^*)^c$. If $f(a) \in A^*$, then there is $\vec{s}\in On^\omega$ such that $(f(a),\vec{s})\in [T]$. But then $(a,\vec{s})\in [f^{-1}(T)]$, so $a\in \rho[f^{-1}(T)]$. Similarly, if $f(a)\in (A^*)^c = \rho[U]$, then $a\in \rho[f^{-1}(U)]$.
    \end{proof}

    By Claim \ref{lipshcitz reduction uniform}, for each $q\in Col(\omega,\gamma)$ there is $x_q\in\R^{V[G\upharpoonright\gamma]}$ coding a Lipschitz reduction of $A_q^*$ to $A^*$. Then there is $\gamma'<\kappa$ and $y\in \R^{V[G\upharpoonright\gamma']}$ such that $y$ codes the function $q\mapsto x_q$ with domain $Col(\omega,\gamma)$. Let $B = \{(x,z) \in \R^*\times \R^*: x \text{ codes } H_q \wedge z\in A_q^*\}$. Then $B$ is in $D$, since $A^*\in D$ and the maps $q\mapsto x_q$ and $q \mapsto H_q$ are in $D$. Clearly $w(B)\geq \Theta_\iota$. So for every $\beta < \kappa^+$, there is a surjection $f_\beta:\R^* \mapsto \beta$ which is ordinal definable from $B$ in $D$. We can choose $f_\beta$ such that $\vec{f} = \langle f_\beta: \beta < \kappa^+\rangle$ is ordinal definable from $B$ in $D$. Pick $g:\lambda \to \kappa$ in $V$ such that $g$ is increasing and cofinal in $\kappa$. For $\alpha < \lambda$ and $\beta <\kappa^+$, let $K(\alpha,\beta) = f_\beta''[\R^{V[G\upharpoonright g(\alpha)]}]$. Let $K = \langle K(\alpha,\beta): \alpha<\lambda, \beta < \kappa^+\rangle$.

    We want to show $K \in V$. Let $\dot{B} \in V$ be a natural name for $B$. That is, we could take $\dot{B}$ to be the set of pairs $(\sigma,p)$ where $p\in Col(\omega,<\kappa)$ and $\sigma$ is a standard $Col(\omega,<\kappa)$-name for a real such that $p$ forces there is $q\in Col(\omega,\gamma)$ such that
    \begin{enumerate}
        \item $\sigma_1$ codes $\dot{H_q}$ and\footnote{$\sigma_1$ and $\sigma_2$ here are some natural names for the pair of reals coded by $\sigma$.}
        \item $\sigma_2\in \dot{A_q}$.\footnote{where $\dot{H_q}$ and $\dot{A_q}$ are names for $H_q$ and $A_q$, respectively, such that the empty condition forces $\dot{H_q} = q\cup \dot{G}\upharpoonright (\gamma\backslash dom(q))$ and $\dot{A_q} = \rho[\dot{T}_{H_q}] \cap \R^*$.}
    \end{enumerate}

    \begin{claim}
    \label{symmetric name}
        If $\alpha < \lambda$ and $\epsilon < \beta < \kappa^+$ then either $\emptyset \Vdash^V_{Col(\omega,<\kappa)} \check{\epsilon} \in f_\beta[\R^{V[\dot{G}\upharpoonright g(\alpha)]}]$ or $\emptyset \Vdash^V_{Col(\omega,<\kappa)} \check{\epsilon} \notin f_\beta[\R^{V[\dot{G}\upharpoonright g(\alpha)]}]$.\footnote{Here $f_\beta$ really refers to the definition in $D$ from $B$ and ordinal parameters.}
    \end{claim}
    \begin{proof}
        Suppose not. Then fix $p,q\in Col(\omega,<\kappa)$ such that $p \Vdash^V_{Col(\omega,<\kappa)} \check{\epsilon} \in f_\beta[\R^{V[\dot{G}\upharpoonright g(\alpha)]}]$ and $q \Vdash^V_{Col(\omega,<\kappa)} \check{\epsilon} \notin f_\beta[\R^{V[\dot{G}\upharpoonright g(\alpha)]}]$. Extending $p$ and $q$ if necessary, we may assume $dom(p) = dom(q)$ and $p\in G$. Let $G_q = q \cup G\upharpoonright (\kappa\backslash dom(q))$. Note $G_q \in D$.

        \begin{subclaim}
            $\dot{B}_G = \dot{B}_{G_q}$
        \end{subclaim}
        \begin{proof}
            Suppose $(x_1,x_2)\in \dot{B}_G$. Then there is $r\in Col(\omega,\gamma)$ such that $x_1$ codes $H_r$ and $x_2\in A_r^* = \rho[\dot{T}_{H_r}] \cap \R^*$. But there is $r'\in Col(\omega,\gamma)$ such that $r'\cup G_q\upharpoonright(\gamma\backslash dom(r')) = H_r$. So $x_1$ codes $r'\cup G_q\upharpoonright(\gamma\backslash dom(r'))$ and $x_2 \in \rho[\dot{T}_{r'\cup G_q\upharpoonright(\gamma\backslash dom(r'))}]\cap\R^*$. Inspecting the definition of $\dot{B}$, it is clear that $(x_1,x_2)\in \dot{B}_{G_q}$.
        \end{proof}

        Since $G$ and $G_q$ disagree at only finitely many values, $\R^{V[G\upharpoonright g(\alpha)]} = \R^{V[G_q\upharpoonright g(\alpha)]}$. But then the definition of $f_\beta\upharpoonright \R^{V[\dot{G}\upharpoonright g(\alpha)]}$ (from $B$ and ordinals) gives the same function in $V[G]$ and $V[G_q]$. Contradiction.
    \end{proof}

    \begin{claim}
        $K\in V$
    \end{claim}
    \begin{proof}
        Immediate from Claim \ref{symmetric name}.
    \end{proof}

    \begin{claim}
    \label{K is covering matrix}
        $K$ is a $\lambda$-covering matrix for $\kappa^+$.
    \end{claim}
    \begin{proof}
        We must check the conditions of Definition \ref{covering matrix definition}.

        \begin{enumerate}
            \item \label{beta covered} For any $\beta < \kappa^+$, $\beta = f_\beta''[\R^*] =  \bigcup_{\alpha<\lambda} f_\beta''[\R^{V[G\upharpoonright g(\alpha)]}] = \bigcup_{\alpha<\lambda} K(\alpha,\beta)$.
            \item The first part holds because $g$ is increasing. For the second part, suppose $\kappa \leq \beta < \kappa^+$ and $\alpha < \lambda$. Note $\beta$ is uncountable in $D$, whereas $\R^{V[G\upharpoonright g(\alpha)]}$ is countable in $D$. So $K(\alpha,\beta) = f_\beta''[\R^{V[G\upharpoonright g(\alpha)]}] \subsetneq \beta$. Then by \ref{beta covered}, there is some $\eta < \lambda$ such that $K(\alpha,\beta) \neq K(\eta,\beta)$.
            \item \label{1st coherence property} Fix $\xi < \beta < \kappa^+$ and $\alpha < \lambda$. Note $\R^{V[G\upharpoonright g(\alpha)]}$ is a countable set of reals in $D$ and $f_\xi \in D$, so $f_\xi''[\R^{V[G\upharpoonright g(\alpha)]}]$ is a countable set of ordinals in $D$. Then since $D$ satisfies countable choice, we can pick $\langle y_i : i<\omega \rangle$ such that $f_\beta''[\langle y_i\rangle] = f_\xi''[\R^{V[G\upharpoonright g(\alpha)]}]$. Pick $\eta < \lambda$ large enough that $\langle y_i\rangle \subseteq V[G\upharpoonright\eta]$. Then $K(\alpha,\xi) \subseteq K(\eta,\beta)$.
            \item In fact, we can prove the stronger property (iv') from \cite{viale}: the same proof as for property \ref{1st coherence property} gives for all $\xi < \beta <\kappa^+$  and all $\eta < \lambda$, there is $\alpha < \lambda$ such that $K(\eta,\beta) \cap \xi \subseteq K(\alpha,\xi)$.
        \end{enumerate}
    \end{proof}

    \begin{claim}
    \label{ot less than kappa}
        For all $\alpha < \lambda$ and $\beta<\kappa^+$, $o.t.(K(\alpha,\beta))<\kappa$.
    \end{claim}
    \begin{proof}
        $K(\alpha,\beta)$ is the image of a countable set of reals in $D$ under $f_\beta$. So $K(\alpha,\beta)$ is countable in $D$, whereas $\kappa = \omega_1^D$.
    \end{proof}

    Since $K\in V$, $CP(\kappa^+,\lambda)$ gives an unbounded $A\subseteq \kappa^+$ such that $[A]^\lambda$ is covered by $K$. Pick $\tau < \kappa^+$ such that $o.t.(A\cap \tau)\geq\kappa$. By Claim \ref{ot less than kappa}, for each $\alpha <\lambda$, there is $\xi_\alpha\in A\cap \tau \backslash K(\alpha,\tau)$. Let $X = \{\xi_\alpha:\alpha <\lambda\}$. $X\in [A]^\lambda$, so the following claim gives a contradiction.

    \begin{claim}
        $X$ is not covered by $K(\alpha,\beta)$ for any $\alpha<\lambda$, $\beta<\kappa^+$.
    \end{claim}
    \begin{proof}
        First suppose $\beta \leq \tau$. Since $K$ is a covering matrix, there is $\eta < \lambda$ such that $K(\alpha,\beta)\subseteq K(\eta,\tau)$. Then $\xi_\eta \in X\backslash K(\alpha,\beta)$.

        Now suppose $\tau <\beta$. Applying the property (iv') we proved in Claim \ref{K is covering matrix}, there is $\eta <\lambda$ such that $K(\alpha,\beta) \cap \tau \subseteq K(\eta,\tau)$. So $\xi_\eta \in X\backslash K(\alpha,\beta)$.
    \end{proof}
\end{proof}

\begin{corollary}
    Suppose $\kappa$ is a limit of Woodin cardinals, $\tau$ is strongly compact, $\kappa \geq \tau$, $cof(\kappa) = \lambda < \tau$, and $D(V,\kappa) \nvDash LSA$. Then $\Theta^{D(V,\kappa)} < \kappa^+$.
\end{corollary}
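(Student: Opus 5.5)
The plan is to reduce the corollary directly to Theorem \ref{result from cp}. All of its hypotheses are already given, except $CP(\kappa^+,\lambda)$. So the only work is to show that a strongly compact $\tau$ with $\lambda<\tau\leq\kappa$ gives $CP(\kappa^+,\lambda)$. This is the large cardinal result of Viale \cite{viale}: if $\tau$ is strongly compact, then $CP(\mu,\lambda)$ holds for all regular $\mu\geq\tau$ and all regular $\lambda<\tau$. Here $\lambda=cof(\kappa)$ is regular. Also $\kappa^+>\kappa\geq\tau$ is regular, so the hypotheses of Viale's result are met.

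If I had to argue it directly, I would proceed as follows. Fix a $\lambda$-covering matrix $K$ for $\mu=\kappa^+$. Take a $\tau$-complete fine ultrafilter $\mathcal{U}$ on $P_\tau(\mu)$, or equivalently a $\tau$-complete uniform ultrafilter on $\mu$. For each $\beta$ we have $\beta=\bigcup_{\alpha<\lambda}K(\alpha,\beta)$, and $\lambda<\tau$. Using a uniform $\tau$-complete ultrafilter $\mathcal{W}$ on $\mu$, pick for each $\xi<\mu$ a least $\alpha_\xi<\lambda$ such that $\{\beta>\xi : \xi\in K(\alpha_\xi,\beta)\}\in\mathcal{W}$. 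This is possible by $\tau$-completeness, since the sets indexed by $\alpha<\lambda$ increase and cover a tail. Since $\mu$ is regular and $\lambda<\mu$, there are a fixed $\alpha^*$ and an unbounded $A\subseteq\mu$ with $\alpha_\xi=\alpha^*$ for all $\xi\in A$. Now take any $X\in[A]^\lambda$. By $\tau$-completeness, the intersection of the $\lambda$ many measure one sets is nonempty, so some single $\beta$ satisfies $X\subseteq K(\alpha^*,\beta)$. Hence $[A]^\lambda$ is covered by $K$. Note that this argument only uses $\tau$-completeness and uniformity on $\mu$, which strong compactness of $\tau\leq\mu$ supplies.

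With $CP(\kappa^+,\lambda)$ in hand, Theorem \ref{result from cp} applies verbatim and yields $\Theta^{D(V,\kappa)}<\kappa^+$. The only point needing care is the existence of a $\tau$-complete uniform ultrafilter on $\kappa^+$. This follows from strong compactness, since every $\tau$-complete filter on $\kappa^+$ extends to a $\tau$-complete ultrafilter; apply this to the filter of co-bounded sets. So I expect no real obstacle beyond citing or reproducing Viale's lemma.
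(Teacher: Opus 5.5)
Your proposal is correct and matches the paper's proof: the paper also obtains $CP(\kappa^+,\lambda)$ from the strongly compact $\tau$ by citing Theorem 10 of \cite{viale}, which applies since $\lambda<\tau\leq\kappa<\kappa^+$, and then applies Theorem \ref{result from cp}. Your self-contained sketch using a $\tau$-complete uniform ultrafilter on $\kappa^+$ is also sound; it needs only clause 1 of the covering matrix definition together with $\tau$-completeness and the regularity of $\kappa^+$.
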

\begin{proof}
    Theorem 10 of \cite{viale} implies $CP(\kappa^+,\lambda)$. Then apply Theorem \ref{result from cp}.
\end{proof}

\begin{corollary}
    Asssume $PFA$. If $\kappa$ is a limit of Woodin cardinals of cofinality $\omega$ and $D(V,\kappa)\nvDash LSA$, then $\Theta^{D(V,\kappa)} < \kappa^+$.
\end{corollary}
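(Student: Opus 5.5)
The plan is to reduce the corollary to Theorem \ref{result from cp} with $\lambda = \omega$. Since $cof(\kappa)=\omega$, the only hypothesis of Theorem \ref{result from cp} still to be verified is $CP(\kappa^+,\omega)$. So the whole task is to show that $PFA$ implies $CP(\kappa^+,\omega)$. Note that $\kappa$ is a singular cardinal of countable cofinality, so $\kappa^+$ is the successor of a singular cardinal of countable cofinality.

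\textbf{Step 1: an $\omega$-covering matrix for $\kappa^+$ is essentially a coherent covering matrix.} Let $K$ be an $\omega$-covering matrix for $\kappa^+$. I would first normalize it to satisfy Viale's property (iv'):
\[
K(\eta,\beta)\cap\xi\subseteq K(\alpha,\xi) \quad \text{for some } \alpha<\omega,
\]
whenever $\xi<\beta<\kappa^+$ and $\eta<\omega$. Our matrix in Claim \ref{K is covering matrix} already has this property, so for the application it is enough to prove covering for matrices satisfying (iv'). Together with (iii), this says the matrix is coherent modulo shifting the finite index.

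\textbf{Step 2: cite Viale's theorem.} Viale \cite{viale} proves that $PFA$ (in fact $MRP$) implies $CP(\theta,\omega)$ for every regular $\theta>\omega_1$ and every $\omega$-covering matrix with property (iv'). I would cite this result, his analysis of the Singular Cardinals Hypothesis via covering matrices, and check that his footnoted definition agrees with Definition \ref{covering matrix definition}. Since $\kappa^+>\omega_1$ is regular, this gives an unbounded $A\subseteq\kappa^+$ with $[A]^\omega$ covered by $K$.

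\textbf{Step 3: close the argument.} The proof of Theorem \ref{result from cp} only uses $CP$ for the specific matrix $K$ built there, and that $K$ satisfies (iv'). So the restricted form of $CP(\kappa^+,\omega)$ obtained in Step 2 suffices. Applying Theorem \ref{result from cp} then gives $\Theta^{D(V,\kappa)}<\kappa^+$.

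\textbf{Main obstacle.} The delicate point is Step 2: making sure Viale's $PFA$ result applies in the generality needed. His theorem is stated for matrices for successors of singular cardinals of countable cofinality, and possibly with extra uniformity hypotheses. If it turns out that only a weaker statement is available, I would instead reprove it directly. The idea would be to use $MRP$ to find a continuous $\in$-chain $\langle N_\xi:\xi<\omega_1\rangle$ of countable elementary submodels of $H(\theta)$ with $K\in N_0$. Setting $\delta=\sup_\xi \sup(N_\xi\cap\kappa^+)$, I would argue that $A=\bigcup_\xi N_\xi\cap\kappa^+$ (or its cofinal analogue) has all its countable subsets covered by some $K(n,\delta)$. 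This would follow the strategy of Viale's proof of SCH from $PFA$.
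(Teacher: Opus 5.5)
Your proposal is correct and matches the paper's proof, which simply cites Viale for $PFA \Rightarrow CP(\kappa^+,\omega)$ and applies Theorem~\ref{result from cp} with $\lambda=\omega=cof(\kappa)$. Restricting to matrices with property (iv') is harmless but unneeded, since the cited result covers all $\omega$-covering matrices in the sense of Definition~\ref{covering matrix definition}; separately, your MRP fallback would not work as sketched, because $\bigcup_{\xi<\omega_1}N_\xi\cap\kappa^+$ has size $\aleph_1$ and is therefore bounded in $\kappa^+$, whereas $CP$ requires an unbounded $A$.
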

\begin{proof}
    By \cite{viale}, $PFA$ implies $CP(\kappa^+,\omega)$. Then apply Theorem \ref{result from cp}.
\end{proof}

Whether Theorem \ref{result from cp} is a viable method towards proving all of part (1) of Conjecture \ref{conj:PFA} remains to be seen ---  besides resolving the case $D(V,\kappa)\models LSA$, it is unknown whether $PFA$ (or even $MM$) implies $CP(\kappa^+,cof(\kappa))$ if $cof(\kappa) >\omega$.

\section{Weakly compact cardinals}\label{sec:WC}

The following theorem appeared in Wilson's thesis (\cite{trevorthesis}). In the next section we will use its proof, which we include here in greater detail for the reader's convenience.

\begin{theorem}[Wilson]
\label{weakly cpt thm}
    Suppose $\kappa$ is weakly compact and $\kappa$ is a limit of Woodin cardinals. If $D(V,\kappa)\nvDash AD_\R$, then $\Theta^{D(V,\kappa)} = \kappa^+$.
\end{theorem}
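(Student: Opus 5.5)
The plan is to prove both inequalities. The upper bound $\Theta^{D(V,\kappa)}\le\kappa^+$ is the standard one. Every set of reals in $D=D(V,\kappa)$ is determined by a $Col(\omega,<\kappa)$-name of size $<\kappa$ together with finitely many ordinals and reals. So $|P(\R)\cap D|\le\kappa^+$ in $V[G]$, and hence no surjection $\R^*\to\kappa^{++}$ exists in $D$. The content is the lower bound, which I would argue by contradiction.

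Assume $D\nvDash AD_\R$ and $\theta:=\Theta^D<\kappa^+$. Since $AD_\R$ fails, $D\models\Theta=\Theta_{\iota+1}$ for some $\iota$. Because $\theta<\kappa^+$, the Wadge hierarchy of $D$ has length $<\kappa^+$, so $P(\R)\cap D$ can be coded by an object of size $\kappa$. Concretely, I would take a sequence $\langle \dot{A}_\xi:\xi<\theta\rangle$ of $Col(\omega,<\kappa)$-names, one for a set of each Wadge rank, each name in $V_{\kappa^+}$. Then use weak compactness: let $M$ be a transitive model of a large enough fragment of ZFC with $|M|=\kappa$, $V_\kappa\subseteq M$, ${}^{<\kappa}M\subseteq M$, and $\langle\dot A_\xi\rangle\in M$. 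Let $j:M\to N$ be elementary with critical point $\kappa$.

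Since $V_\kappa\subseteq M$ and $V_\kappa^N=V_\kappa$, the derived model of $N$ at $\kappa$ is exactly $D$, and $G$ is also $N$-generic. In $N$, $\kappa$ is a limit of Woodin cardinals below the larger limit $j(\kappa)$. So in $N[G]$ the sets $A\in P(\R^*)\cap D$ are all ${<}j(\kappa)$-homogeneously Suslin. Concretely, for each $\xi<\theta$ the set $(\dot A_\xi)_G$ is given by trees in $N[G]$ that are ${<}j(\kappa)$-complementing, using the Woodin cardinals in $[\kappa,j(\kappa))$ and the Martin--Steel/derived model theorem computed in $N$. Hence, in the derived model $j(D)=D(N,j(\kappa))$ computed from $j(G)\supseteq G$, every set of $D$ is Suslin-co-Suslin, and the map sending $(\dot A_\xi)_G$ to its canonical extension is uniform in $\xi$ since $\langle \dot A_\xi\rangle\in M$.

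Then pull back by elementarity. In $j(D)$ there is a Wadge-initial segment of the Suslin-co-Suslin sets, namely the extensions of the sets in $D$. This segment is closed under the relevant operations and has length $j(\theta)$ restricted appropriately. The statement ``every set of reals of Wadge rank below $\Theta$ is Suslin-co-Suslin'' in $j(D)$, restricted to the image of $\langle\dot A_\xi\rangle$, reflects to $D$. The result is that $D\models$ every set of reals is Suslin. Under $AD^+$ this gives $D\models AD_\R$, contradicting the hypothesis.

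The main obstacle is the middle step. I must verify that the canonical extensions of the sets of $D$ to $\R^{j(G)}$ really land in $j(D)$ as Suslin-co-Suslin sets, in a way that $j$ respects. The concern is that $j(\dot A_\xi)$ evaluated at $j(G)$ must agree with the ${<}j(\kappa)$-homogeneous extension of $(\dot A_\xi)_G$. I would attack this via the uniqueness of $A^*$ for $<\kappa$-complementing trees, as in the footnote to the proof of Theorem~\ref{result from cp}, together with the tree-absoluteness argument of Claim~\ref{lipshcitz reduction uniform}.
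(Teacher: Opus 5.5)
Your argument has a genuine gap at its central step. That step claims every $A\in P(\R^*)\cap D$ is ${<}j(\kappa)$-homogeneously Suslin in $N[G]$, with $j(A)$ equal to the canonical extension of $A$. Here $j(A)$ must be computed from a generic $G*H\subseteq Col(\omega,{<}j(\kappa))$; $j(G)$ itself is meaningless since $G\notin M$. Nothing you cite yields this claim. Martin--Steel only propagates homogeneity from sets that are already homogeneously Suslin, through complements and real quantifiers. The derived model theorem only identifies the Suslin-co-Suslin sets of $j(D)$ with $Hom^*$. Neither makes a non-Suslin-co-Suslin set of $D$ universally Baire in $N[G]$; take, e.g., a universal set for the largest Suslin pointclass $\mathbf{\Gamma}$.

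In fact, for such an $A$ this step is the theorem in disguise. Membership in $Hom^*$ is determined by $V_\kappa[G]$, and $V_\kappa\subseteq M$. So by elementarity, $j(A)$ is Suslin-co-Suslin in $j(D)$ iff $A$ is Suslin-co-Suslin in $D$. Hence for $A\notin Hom^*$, either $A\notin Hom^{N[G]}_{<j(\kappa)}$ or $j(A)$ differs from the canonical extension of $A$ --- exactly your ``main obstacle.'' The tools you propose for it are the uniqueness of $A^*$ and the tree pullback of Claim~\ref{lipshcitz reduction uniform}. These apply only to sets already given by ${<}\kappa$-complementing trees in some $V[G\upharpoonright\gamma]$, i.e.\ to sets already in $Hom^*$, so invoking them here is circular.

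A clear symptom is that $\Theta^D<\kappa^+$ never does real work in your proof. It only lets you put all the names into $M$, and for a single set that is possible anyway. So your argument, if valid, would give $D(V,\kappa)\models AD_\R$ for \emph{every} weakly compact limit of Woodin cardinals. That cannot be a theorem: $AD_\R$ has the consistency strength of a limit $\lambda$ of Woodin and ${<}\lambda$-strong cardinals, and any ${<}\lambda$-strong cardinal below such a $\lambda$ is already a weakly compact limit of Woodins. This is also why Corollary~\ref{cor:WC} needs indestructibility.

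The missing idea is the paper's substantive use of $\Theta^D<\kappa^+$. In your notation, take the domain $M\prec V$ containing $\Theta^D$, with target $N$. Then there is a surjection in $M[G]$ from $\kappa$ onto $(\mathrm{meas}^{\mathbf{\Gamma}}(\delta^{<\omega}))^D$. So $\sigma=j''(\mathrm{meas}^{\mathbf{\Gamma}}(\delta^{<\omega}))^D$ is countable in $N[G*H]$, where $\kappa$ is countable. One then plays Wilson's game $G^\sigma_{j(T)}$. Player II answers with the measures $\bar\mu_{i+1}=\{A:\langle\alpha_0,\dots,\alpha_i\rangle\in j(A)\}$, derived from $j$ using Player I's ordinal moves as seed. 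These measures lie in $D$ because their code sets are in $\mathbf{Env}(\mathbf{\Gamma})$, which again uses $\Theta^D<\kappa^+$. An infinite run would produce a tower that, by the closure of the models, is both illfounded and wellfounded in $M[G]$, so Player II wins. Wilson's Lemma 3.6.4 then yields a tree for the complement of the universal $\mathbf{\Gamma}$-set alone, contradicting maximality of $\mathbf{\Gamma}$. No claim about all sets of $D$ is needed.

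A minor point: your count for $\Theta^D\le\kappa^+$ is not justified as stated, since the ordinal parameters are unbounded. The bound is immediate from $|\R^*|=\kappa$ in $V[G]$ and the $\kappa$-c.c.
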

\begin{proof}
    Suppose not. Take $N\prec V$ such that $N$ is transitive, $|N| = \kappa$, $\Theta^{D(V,\kappa)} \subset N$, and $^{<\kappa}N\subset N$. By weak compactness of $\kappa$, there is an elementary embedding $j: N \to M$ such that $M$ is transitive, $|M| = \kappa$, $^{<\kappa}M\subset M$ and $crit(j) = \kappa$.

    Let $G\subseteq Col(\omega,<\kappa)$ be generic over $V$ (in particular, $G$ is generic over $N$). Let $D$ be the derived model of $N$ at $\kappa$ constructed using $G$. As we are assuming the theorem fails, $D\nvDash AD_\R$. Then $D$ has a largest Suslin pointclass $\mathbf{\Gamma}$, and corresponding largest Suslin cardinal $\delta$. Let $T\in D$ be a tree on $\omega \times \delta$ which is a tree for a scale on a universal $\mathbf{\Gamma}$-set. We may assume $T\in V$.

    We can extend $j$ to a map $j: N[G]\to M[G*H]$, where $H$ is chosen such that $G*H \subseteq Col(\omega,<j(\kappa))$ is generic over $V$. Note then $j(D)$ is the derived model in $M$ at $j(\kappa)$.

    Let $\sigma = j''((meas^\mathbf{\Gamma}(\delta^{<\omega}))^D)$.

    \begin{claim}
    \label{sigma in M(R)}
        $\sigma\in HOD^{M[G*H]}_{M\cup \R^*_{G*H}}$ and is countable in $HOD^{M[G*H]}_{M\cup \R^*_{G*H}}$.
    \end{claim}
    \begin{proof}
        Clearly $\sigma \subset j(D) \subset HOD^{M[G*H]}_{M\cup \R^*_{G*H}}$. Since we are assuming $\Theta^{D(V,\kappa)}<\kappa^+$, $\Theta^D < (\kappa^+)^N$, so there is a surjection $f: \kappa \to (meas^{\mathbf{\Gamma}}(\delta^{<\omega}))^D$ in $N[G]$. Then, since $crit(j) = \kappa$, $j(f) \upharpoonright \kappa$ is a surjection of $\kappa$ onto $\sigma$ in $M[G*H]$. And $\kappa$ is countable in $M[G*H]$, so $\sigma$ is countable in $M[G*H]$.

        Enumerate $\sigma$ in $M[G*H]$ as $\sigma = \langle A_i:i\in\omega \rangle$. Pick $x_i\in \R^*_{G*H}$ such that $A_i$ is definable in $M[G*H]$ from parameters in $M\cup\{x_i\}$. Since $M^\omega \subset M$, by taking the union of the parameters in $M$ used to define $A_i$, there is a single set $X \in M$ such that for any $i\in\omega$, $A_i$ is definable in $M[G*H]$ from $X$ and $x_i$.

        Let $x = \bigoplus_{i\in\omega} x_i$. Since $j(\kappa)$ is regular in $M$, $cof(j(\kappa))^{M[G*H]} > \omega$, so $x\in \R^*_{G*H}$. And $\sigma$ is definable in $M[G*H]$ from $X$ and $x$, so $\sigma \in HOD^{M[G*H]}_{M\cup\R^*_{G*H}}$.
    \end{proof}

    Let $\tilde{T} = j(T)$. Consider the game $G^\sigma_{\tilde{T}}$ from Definition 3.6.3 of \cite{trevorthesis}

    \begin{claim}
    \label{PII wins}
        Player II has a winning strategy in $HOD^{M[G*H]}_{M\cup \R^*_{G*H}}$ for the game $G^\sigma_{\tilde{T}}$.
    \end{claim}
    \begin{proof}
        We alter the game $G^\sigma_{\tilde{T}}$ to get an equivalent game in $V$ as follows. By Proposition 3.5.5 of \cite{trevorthesis}, there is a wellordering $\leq^* \in j(D)$ of $j((meas^{\mathbf{\Gamma}}(\delta)^{<\omega})^D)$ which is definable in $V[G*H]$ from $\tilde{T}$. And $\sigma$ is also definable in $V[G*H]$ from $j\upharpoonright N$, so the set $Z$ of ordinals coding sets in $\sigma$ relative to the wellordering $\leq^*$ is in $V$. Let $G_{\leq^*}$ be the game played just like $G^\sigma_{\tilde{T}}$, except Player II plays elements of $Z$ coding a measure in $\sigma$ rather than the measure itself. Then $G_{\leq^*}$ is in both $V$ and $HOD^{M[G*H]}_{M\cup \R^*_{G*H}}$. $G_{\leq^*}$ is a closed game, so Player II has a winning strategy for $G_{\leq^*}$ in $HOD^{M[G*H]}_{M\cup \R^*_{G*H}}$ if and only if Player II has a winning strategy for $G_{\leq^*}$ in $V$. Clearly, playing $G_{\leq^*}$ and $G^\sigma_{\tilde{T}}$ in $HOD^{M[G*H]}_{M\cup \R^*_{G*H}}$ are equivalent, so it suffices to show Player II has a winning strategy for $G_{\leq^*}$ in $V$.
        
        We define a winning strategy $\tau\in V$ for Player II in the game $G_{\leq^*}$ as follows. Suppose $\langle (n_j,\alpha_j,h_j,\zeta_j): j \leq i\rangle$ is a partial play of $G_{\leq^*}$, where $\zeta_j$ codes some $\mu_j\in \sigma$. For $A\in P(\delta)^D$, let $A\in \bar{\mu}_{i+1}$ if and only if $\langle \alpha_0,...,\alpha_i\rangle \in j(A)$. Let $\mu_{i+1} = j(\bar{\mu}_{i+1})$.

        \begin{subclaim}
        \label{mu in derived model}
            $\bar{\mu}_{i+1}\in (meas^\mathbf{\Gamma}(\delta^{<\omega}))^D$ (and therefore $\mu_{i+1} \in \sigma$).
        \end{subclaim}
        \begin{proof}
            Clearly, $\bar{\mu}_{i+1}$ is countably complete --- what we must show is that $\bar{\mu}_{i+1}\in D$. Since $\kappa$ is regular, $\R^{V[G]} = \R^*_G$. Then $\mathbf{\Gamma}$ is also an inductive-like pointclass of $V[G]$ and $V[G] \models \mathbf{\Delta_\Gamma}$ is determined. Then by Lemma 3.5.4 of \cite{trevorthesis} (applied in $V[G]$), the code set for $\bar{\mu}_{i+1}$ is in $\mathbf{Env(\Gamma)}^{V[G]}$. But $\mathbf{Env(\Gamma)}^{V[G]} = \mathbf{Env(\Gamma)}^D$,\footnote{By Proposition 3.2.5 of \cite{trevorthesis}, any set in $\mathbf{Env(\Gamma)}$ is ordinal definable from a universal $\mathbf{\Gamma}$-set and a real in $\R^*_G$, so $\mathbf{Env(\Gamma)}\subseteq V(\R^*_G)$. From the definition of the envelope, we get any two transitive models with the same reals and ordinals and both containing $\mathbf{\Gamma}$ will agree on $\mathbf{Env(\Gamma)}$. This immediately gives $\mathbf{Env(\Gamma)}^D = \mathbf{Env(\Gamma)}^{N[G]}$. To get $\mathbf{Env(\Gamma)}^{N[G]} = \mathbf{Env(\Gamma)}^{V[G]}$, we use that we are assuming for contradiction that $\Theta^{D(V,\kappa)} < \kappa^+$. Because we picked $N$ such that $\Theta^{D(V,\kappa)} \subset N$, $\mathbf{Env(\Gamma)}^{N[G]}$ is a Wadge initial segment of $\mathbf{Env(\Gamma)}^{V[G]}$ with the same Wadge rank, so they are equal.} so $\bar{\mu}_{i+1}\in D$.
        \end{proof}

        \begin{subclaim}
        \label{mu projects correctly}
            $proj_{i+1,i}(\mu_{i+1}) = \mu_i$
        \end{subclaim}
        \begin{proof}
            It suffices to show $proj_{i+1,i}(\bar{\mu}_{i+1}) = \bar{\mu}_i$. But\footnote{See Definition 3.5.6 of \cite{trevorthesis} for the definitions of $proj$ and $ext$.}
            \begin{align*}
                A\in \bar{\mu}_i &\implies \langle \alpha_0,...,\alpha_{i-1}\rangle \in j(A)\\
                &\implies \langle \alpha_0,...,\alpha_i\rangle\in ext_{i,i+1}(j(A))\\
                &\implies \langle \alpha_0,...,\alpha_i\rangle \in j(ext_{i,i+1}(A))\\
                &\implies ext_{i,i+1}(A) \in \bar{\mu}_{i+1}.
            \end{align*}
        \end{proof}

        \begin{subclaim}
        \label{mu concentrates on T}
            $\mu_{i+1}$ concentrates on $\tilde{T}_{(n_0,...,n_i)}$.
        \end{subclaim}
        \begin{proof}
            We must show $\{(\xi_0,...,\xi_i):((n_0,...,n_i),(\xi_0,...,\xi_i))\in\tilde{T}\}\in \mu_{i+1}$. But $\{(\xi_0,...,\xi_i):((n_0,...,n_i),(\xi_0,...,\xi_i))\in\tilde{T}\} = j(\{(\xi_0,...,\xi_i):((n_0,...,n_i),(\xi_0,...,\xi_i))\in T\})$. So it suffices to show $\{(\xi_0,...,\xi_i):((n_0,...,n_i),(\xi_0,...,\xi_i))\in T\} \in \bar{\mu}_{i+1}$, i.e. $(\alpha_0,...,\alpha_i)\in j(\{(\xi_0,...,\xi_i):((n_0,...,n_i),(\xi_0,...,\xi_i))\in T\})$. But $j(\{(\xi_0,...,\xi_i):((n_0,...,n_i),(\xi_0,...,\xi_i))\in T\}) = \{(\xi_0,...,\xi_i): ((n_0,...,n_i),(\xi_0,...,\xi_i))\in \tilde{T}\}$, and $(\alpha_0,...,\alpha_i)\in \tilde{T}_{(n_0,...,n_i)}$ by the rules of $G_{\leq^*}$.
        \end{proof}

        Let $\tau(\langle (n_j,\alpha_j,h_j,\zeta_j): j\leq i\rangle) = \zeta_{i+1}$, where $\zeta_{i+1}$ is the unique ordinal coding $\mu_{i+1}$ relative to $\leq^*$. Subclaims \ref{mu in derived model}-\ref{mu concentrates on T} show that $\tau$ plays valid moves for Player II in $G_{\leq^*}$. If both players follow the rules of $G_{\leq^*}$, then Player I wins. So to show $\tau$ is a winning strategy, we suppose $\langle n_i,\alpha_i,h_i,\zeta_i:i\in\omega\rangle$ is a play of $G_{\leq^*}$ in which Player II has played according to $\tau$ and Player I has played in accordance with the rules of $G_{\leq^*}$ and derive a contradiction.

        By the rules for Player I, $j_{\mu_i,\mu_{i+1}}(h_i) > h_{i+1}$. By the countable closure of $M$, $\langle \zeta_i:i<\omega\rangle \in M$, and thus $\vec{\mu} = \langle \mu_i:i\in\omega\rangle \in M[G*H]$. Also by countable closure of $M$, $\langle h_i:i\in\omega\rangle \in M$. So $\langle h_i:i\in\omega\rangle$ witnesses $\vec{\mu}$ is illfounded in $M[G*H]$. 
        
        \begin{claim}
            $\bar{\mu} = \langle \bar{\mu}_i:i\in\omega\rangle$ is illfounded in $N[G]$.
        \end{claim}
        \begin{proof}
            If $\bar{\mu} \in N[G]$, then $j(\bar{\mu}) = \vec{\mu}$ (since $j(\bar{\mu}_i) = \mu_i$ for each $i$) and the claim follows from elementarity of $j$. So it suffices to show $\bar{\mu} \in N[G]$. 
            
            Since $j(\bar{\mu}_i) = \mu_i$, there is $\bar{\zeta}_i \in N$ such that $j(\bar{\zeta}_i) = \zeta_i$ and $\bar{\zeta}_i$ codes $\bar{\mu}_i$ relative to $j^{-1}(\leq^*)$. But $\langle \zeta_i:i<\omega\rangle \in V$, so $\langle \bar{\zeta}_i:i<\omega\rangle \in V$ and by the countable closure of $N$, $\langle \bar{\zeta}_i:i<\omega\rangle \in N$. It follows that $\bar{\mu} \in N[G]$.
        \end{proof}

        On the other hand, suppose $\langle X_i :i\in\omega\rangle\in N[G]$ is such that $X_i\subseteq P(\delta^i)$ and $X_i\in\bar{\mu}_i$. To show $\bar{\mu}$ is wellfounded in $N[G]$, we need to show in $N[G]$ there is $f\in\delta^\omega$ such that for every $i\in\omega$, $f\upharpoonright i\in X_i$. Again by elementarity, it suffices to show in $M[G*H]$ there is $f\in j(\delta)^\omega$ such that $f\upharpoonright i \in j(X_i)$ for each $i\in\omega$. $f(i) = \alpha_i$ works, since $\langle \alpha_i:i\in\omega\rangle\in M[G*H]$ by the countable closure of $M$. But then $\bar{\mu}$ is wellfounded in $N[G]$, a contradiction.
    \end{proof}

    From Claim \ref{PII wins}, and Lemma 3.6.4 of \cite{trevorthesis}, there is a tree $S \in HOD^{M[G*H]}_{M\cup \R^*_{G*H}}$ such that $\rho[S] = \rho[\tilde{T}]^c$. Then by elementarity of $j$, $HOD^{N[G]}_{N\cup \R^*_G}$ satisfies there is a tree $S$ such that $\rho[S] = \rho[T]^c$. Then there is $\gamma < \kappa$ such that $S\in V[N\upharpoonright \gamma]$. But then in $D$, $\rho[T]^c$ is Suslin, a contradiction.
\end{proof}

\begin{corollary}[Wilson]\label{cor:WC}
    If $\kappa$ is a limit of Woodins and $\kappa$ is $Col(\kappa, \kappa^+)$-indestructibly weakly compact, then $D(V,\kappa)\models AD_\R$.
\end{corollary}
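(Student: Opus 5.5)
The strategy is to force with $Col(\kappa,\kappa^+)$ and compare $D(V,\kappa)$ with $D(V^{Col(\kappa,\kappa^+)},\kappa)$. Then apply Theorem \ref{weakly cpt thm} in the extension. Along the way we play the conclusion $\Theta=\kappa^+$ off against the fact that $\kappa^+$ has been collapsed.

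Suppose toward a contradiction that $D(V,\kappa)\nvDash AD_\R$. Let $P=Col(\kappa,\kappa^+)$ and let $K\subseteq P$ be $V$-generic. First we check that the hypotheses survive in $V[K]$:
\begin{itemize}
\item $P$ is $<\kappa$-closed and has size $\kappa^+$, so it adds no subsets of any $\delta<\kappa$.
\item By the usual closure argument it adds no new subsets of $V_\kappa$ of size $<\kappa$. Hence $V_\kappa^{V[K]}=V_\kappa$.
\item Each Woodin cardinal $\delta<\kappa$ stays Woodin, since Woodinness of $\delta$ is witnessed inside $V_\kappa$ (by extenders in $V_{\kappa}$, with $\kappa$ inaccessible).
\item By indestructibility, $\kappa$ is still weakly compact in $V[K]$.
\end{itemize}

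Next we check that the derived model is unchanged. Since $V_\kappa^{V[K]}=V_\kappa$, the forcing $Col(\omega,<\kappa)$ is the same in both models. Take $G\subseteq Col(\omega,<\kappa)$ generic over $V[K]$; it is also generic over $V$, with the same $\R^*_G$. The derived model $D(V,\kappa)$ is built from $\R^*_G$ and the sets $A^*$ for $A\in Hom_{<\kappa}^{V[G\upharpoonright\gamma]}$, $\gamma<\kappa$. For the new derived model we need the homogeneously Suslin sets and their canonical extensions to agree between $V$ and $V[K]$. The point is that $<\kappa$-homogeneity trees and their witnessing measures can be taken in $V_\kappa[G\upharpoonright\gamma]$, because every $\delta<\kappa$ is witnessed by Woodins below $\kappa$. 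Since $V_\kappa$ is unchanged, the two models agree here. I expect this to be the main technical obstacle: one must verify carefully, from the precise definition in \cite{derivedmodelpfa}, that $D(V,\kappa)=D(V[K],\kappa)$, or at least that the two share the same sets of reals. Granted this, $D(V[K],\kappa)\nvDash AD_\R$.

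Now apply Theorem \ref{weakly cpt thm} in $V[K]$. It gives $\Theta^{D(V[K],\kappa)}=(\kappa^+)^{V[K]}$, and $(\kappa^+)^{V[K]}=(\kappa^{++})^V$ because $P$ collapses $\kappa^+$ and preserves $\kappa^{++}$ (it has the $\kappa^{++}$-c.c.). On the other hand, $\Theta^{D(V[K],\kappa)}=\Theta^{D(V,\kappa)}$, since the two derived models coincide (or at least have the same sets of reals, which determines $\Theta$). Every prewellordering of reals in $D(V,\kappa)$ has length below $\Theta$, and $D(V,\kappa)$ has size at most that of $\R^*\times$ its ordinals relevant here. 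Each set of reals in $D(V,\kappa)$ is coded in $V$ by a name of size $\kappa$, so $V$ has at most $2^\kappa$ many such sets, and $\Theta^{D(V,\kappa)}<(2^\kappa)^+$. To get the needed bound $\Theta^{D(V,\kappa)}\le(\kappa^+)^V$, it is easiest to argue directly: every $\beta<\Theta^{D(V,\kappa)}$ is the length of a prewellordering that is OD from a set of reals $A^*$ and a real. The sets $A^*$ come from trees in $V_\kappa[G\upharpoonright\gamma]$, so there are only $\kappa$ many of them. Each OD-from-$(A^*,x)$ prewellordering has length below $\Theta^{L(A^*,\R^*)}$, which is below $(\kappa^+)^V$ by a standard counting argument of the kind used in \cite{derivedmodelpfa}, Theorem 4.12. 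Taking the supremum over $\kappa$ many such bounds, we get $\Theta^{D(V,\kappa)}\le(\kappa^+)^V<(\kappa^{++})^V$. This contradicts the equality from the previous step, so $D(V,\kappa)\models AD_\R$.
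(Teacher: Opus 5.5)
Your overall route is the right one, and it is how the corollary is meant to follow from Theorem~\ref{weakly cpt thm} (the paper states it without proof). You force with $Col(\kappa,\kappa^+)$ and note that $V_\kappa$, the Woodin cardinals below $\kappa$, and the derived model built from a $Col(\omega,<\kappa)$-generic $G$ over $V[K]$ are all unchanged. You then use indestructibility to apply Theorem~\ref{weakly cpt thm} in $V[K]$, and play $(\kappa^+)^{V[K]}$ off against an upper bound for $\Theta^{D(V,\kappa)}$ computed over $V$.

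Your invariance step is fine. The clean reason is twofold. First, since $\kappa$ is inaccessible, $<\kappa$-absolutely complementing trees can be replaced by transitive collapses of hulls of size $<\kappa$, so they may be taken in $V_\kappa[G\upharpoonright\gamma]$. Second, $Col(\kappa,\kappa^+)$ adds no reals to $V[G\upharpoonright\gamma][g]$ for small generics $g$ (Easton's lemma). Hence such trees are absolutely complementing in $V[G\upharpoonright\gamma]$ iff they are in $V[K][G\upharpoonright\gamma]$.

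The problem is your justification of $\Theta^{D(V,\kappa)}\le(\kappa^+)^V$.

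\begin{itemize}
\item A prewellordering that is ordinal definable in $D$ from $(A^*,x)$ need not belong to $L(A^*,\R^*)$. So nothing bounds its length by $\Theta^{L(A^*,\R^*)}$.
\item No counting argument gives $\Theta^{L(A^*,\R^*)}<\kappa^+$; that would need something like a sharp for $L(A^*,\R^*)$ in $V[G]$.
\item The chain proves too much. A supremum of $\kappa$ many ordinals, each below the regular cardinal $\kappa^+$, is strictly below $\kappa^+$. So you would get $\Theta^{D(V,\kappa)}<\kappa^+$ for every inaccessible limit of Woodins. Theorem~\ref{weakly cpt thm} applied in $V$ itself would then give the corollary for every weakly compact limit of Woodins, with no indestructibility and no forcing. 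It would also prove part (1) of Conjecture~\ref{conj:PFA} for inaccessible $\kappa$ with no forcing axiom at all.
\item Your other bound, $\Theta^{D(V,\kappa)}<(2^\kappa)^+$, is correct but too weak unless $2^\kappa=\kappa^+$.
\end{itemize}

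The bound you need is trivial. Every real in $\R^*$ has a nice $Col(\omega,\gamma)$-name in $V_\kappa$ for some $\gamma<\kappa$, so $|\R^*|\le\kappa$ in $V[G]$. Any surjection in $D$ of $\R^*$ onto an ordinal $\alpha$ lies in $V[G]$, so $\alpha<(\kappa^+)^{V[G]}=(\kappa^+)^V$ by the $\kappa$-c.c.\ of $Col(\omega,<\kappa)$. Hence $\Theta^{D(V,\kappa)}\le(\kappa^+)^V<(\kappa^+)^{V[K]}$. For this you only need that $(\kappa^+)^V$ is collapsed, not that $\kappa^{++}$ survives. This contradicts $\Theta^{D(V[K],\kappa)}=(\kappa^+)^{V[K]}$, since the two derived models coincide. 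With that replacement, your proof is complete.
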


\section{$AD_\R$ from PFA}\label{sec:ADRPFA}

In the last section we saw $D(V,\kappa) \models AD_\R$ follows from sufficient large cardinal hypotheses on $\kappa$. Here we provide some evidence that assuming $PFA$, weaker assumptions on $\kappa$ should give $D(V,\kappa)\models AD_\R$. We do not see how to prove the results of this section without assuming a version of mouse capturing holds in the derived model. Specifically, we will assume there is a hod pair $(P,\Sigma)$ in the derived model such that the derived model satisfies
\begin{enumerate}
        \item \label{smsc} $V = L(Lp^\Sigma(\R))$ and
        \item \label{super-small msc} super-small $\Gamma$-$\Sigma^*$-mouse capturing holds on a cone (in the sense of Definition 5.25 of \cite{lsa}, where $\bf{\Gamma}$ is the largest Suslin pointclass of the derived model).
\end{enumerate}

\begin{remark}
    Assumptions (\ref{smsc}) and (\ref{super-small msc}) may be redundant. (\ref{smsc}) follows from (\ref{super-small msc}) assuming some smallness assumptions (see \cite{mscfsor}). On the other hand, Remark \ref{proving super-small mc} shows we can replace our use of (\ref{super-small msc}) in the proof of Theorem \ref{cof at least kappa} with another application of (\ref{smsc}) if we are working below superstrong cardinals.
\end{remark}

\begin{theorem}
\label{cof at least kappa}
    Suppose $\kappa$ is a regular limit of Woodin cardinals and there is a hod pair $(P,\Sigma)$ in $D(V,\kappa)$ such that $D(V,\kappa)$ satisfies $V = L(Lp^\Sigma(\R^*))$ and ``super-small $\Gamma$-$\Sigma^*$-mouse capturing holds on a cone.'' Then $cof(\Theta^{D(V,\kappa)})\geq \kappa$.
\end{theorem}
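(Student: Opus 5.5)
Suppose toward a contradiction that $cof(\Theta^{D})<\kappa$, where $D=D(V,\kappa)$ is built from $G\subseteq Col(\omega,<\kappa)$. Fix a cofinal sequence $\langle \theta_i : i<\mu\rangle$ in $\Theta^D$ with $\mu<\kappa$; we may take it in $V[G]$. The plan is to show that the whole sequence is captured by a single set of reals in $D$, so that $\Theta^D$ is reached by a surjection from $\R^*$. The key point is that every proper Wadge initial segment of $D$ is captured by a $\Sigma$-mouse over a real. Since $D\models V=L(Lp^\Sigma(\R^*))$, each $\theta_i$ is witnessed by a set of reals $A_i\in D$ of Wadge rank $\geq\theta_i$, and $A_i$ is definable over some initial segment of $Lp^\Sigma(\R^*)$. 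Super-small $\Gamma$-$\Sigma^*$-mouse capturing on a cone then gives a real $x_i\in\R^*$ and a countable $\Sigma$-mouse $\mathcal{M}_i$ over $x_i$ (in $D$) which defines a set of Wadge rank $\ge\theta_i$ in a uniform, ordinal-definable way from $\Sigma$.

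Next, use regularity of $\kappa$ to bound all the relevant reals below $\kappa$. Each $x_i$ and a real coding $\mathcal{M}_i$ lie in $V[G\upharpoonright\gamma_i]$ for some $\gamma_i<\kappa$. Since $\mu<\kappa$ and $\kappa$ is regular, $\gamma=\sup_i\gamma_i<\kappa$, and moreover the sequence $\langle(x_i,\mathcal{M}_i):i<\mu\rangle$ lies in $V[G\upharpoonright\gamma']$ for some $\gamma'<\kappa$, because $Col(\omega,<\kappa)$ is $\kappa$-c.c. and the sequence has size $<\kappa$. In $D$ this set is countable, so a single real $z\in\R^*$ codes the whole sequence. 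This is the same style of argument as in Claim~\ref{lipshcitz reduction uniform}, used there to collect the reductions $x_q$ into one real $y$.

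From $z$ and $\Sigma$ (more precisely $\Sigma^*$, the canonical extension of $\Sigma$ to $D$), define in $D$ the set $B=\{(i,w): w\in A_i\}$, where $i$ is coded by a real via $z$. Because each $\mathcal{M}_i$ is a $\Sigma$-mouse whose iteration strategy is determined by $\Sigma^*$, the uniform definition of $A_i$ from $\mathcal{M}_i$ is carried out by a single formula over $Lp^\Sigma(\R^*)$ using $\Sigma^*$. Hence $B\in D$ and $B$ is ordinal definable from $\Sigma^*$ and $z$. Then the Wadge rank of $B$ is at least every $\theta_i$, hence at least $\Theta^D$, which is impossible since every set of reals in $D$ has Wadge rank $<\Theta^D$.

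The main obstacle is the uniform capturing step. We must show that the capturing mice can be chosen so that $i\mapsto A_i$ is a single definable operation in $D$, and that the $\mathcal{M}_i$ are honestly countable objects that $\Sigma^*$ iterates correctly. This is exactly where super-small $\Gamma$-$\Sigma^*$-mouse capturing is needed. I would first try to reduce it to the fact that, by $V=L(Lp^\Sigma(\R^*))$, every set of reals appears in $Lp^\Sigma(x)$ for a suitable real $x$ in the cone, possibly after relativizing to a $\Gamma$-Suslin representation. If that works, the $A_i$ become uniformly definable from $x_i$ without choosing mice.
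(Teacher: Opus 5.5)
There is a genuine gap, and it is the step you yourself flag: the uniform capturing step cannot hold in the form your argument needs. Suppose one formula $\phi$ defines each $A_i$ in $D$, or over $Lp^\Sigma(\R^*)$, from $\Sigma^*$ and a real $m_i\in\R^*$ coding $(x_i,\mathcal{M}_i)$.

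\begin{itemize}
\item By Separation in $D$, the set $B_\phi=\{(m,w)\in\R^*\times\R^*: D\models\phi(w,m,\Sigma^*)\}$ is a set of reals in $D$, so $w(B_\phi)<\Theta^D$.
\item Each $A_i$ Lipschitz-reduces to $B_\phi$ via $w\mapsto(m_i,w)$.
\item Hence $\sup_i w(A_i)\le w(B_\phi)<\Theta^D$.
\end{itemize}

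So no family defined uniformly from $\Sigma^*$ and reals can be Wadge-cofinal in $\Theta^D$. This holds however short the index set is and however the $m_i$ are packed into one real $z$. Your $B$ is exactly such a set, so the contradiction ``$w(B)\ge\Theta^D$'' rests on a false premise.

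The analogy with Claim~\ref{lipshcitz reduction uniform} actually points the other way. There the $A^*_q$ can be collected only because they all reduce to one fixed $A^*\in D$, and the collected set reaches only rank $\Theta_\iota$.

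To get cofinally high Wadge ranks you need ordinal parameters, such as the levels of $Lp^\Sigma(\R^*)$ where the $A_i$ appear. These are ordinals above $\kappa=\omega_1^D$, so reals do not code them, and their $\mu$-sequence lives in $V[G]$ with no reason to belong to $D$. Showing that it does is essentially the theorem itself.

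You also misread mouse capturing. Super-small $\Gamma$-$\Sigma^*$-mouse capturing does not give countable mice that define sets of reals of large Wadge rank. As it is used in the paper, it places sets in $C_\Gamma(\R_X)$ into $Lp^\Sigma(\R_X)$. Also, $\Sigma$ is an oracle for $\Sigma$-mice, not their iteration strategy.

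The missing idea is how the paper uses $cof(\Theta)<\kappa$. Small cofinality lets it take a hull $X\prec V_{\kappa^{++}}$ with:
\begin{itemize}
\item $X\cap\kappa=\kappa_X<\kappa$ and $X^{<\kappa_X}\subseteq X$;
\item $T\in X$, where $T$ projects to a universal set $U$ for the largest Suslin pointclass $\mathbf{\Gamma}$;
\item $X$ cofinal in $\Theta$.
\end{itemize}

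It then proves the fullness Lemma~\ref{fullness}, $D_X|\Theta_X=(Lp^\Sigma(\R_X))^{V[G]}$. This uses the ultrapower of the least bad level $M_X$ by the extender derived from $\pi_X$, comparison of the $\H^*_X$ to get countable iterability of $M^*_X$, and the scales analysis.

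Finally it runs Wilson's weakly compact argument with $\pi_X$ in place of $j$. The measures $\bar\mu_{i+1}$ induced from a play via $\pi_X$ are ordinal definable in $D$ from $\Sigma$, $\R_X$ and reals in $\R_X$. So they lie in $C_\Gamma(\R_X)$, then in $Lp^\Sigma(\R_X)$ by mouse capturing, and then in $D_X$ by fullness. Hence Player II wins $G^\sigma_T$. This yields a tree for the complement of the universal $\mathbf{\Gamma}$-set lying in some $V[G\upharpoonright\gamma]$ with $\gamma<\kappa$, contradicting that $\mathbf{\Gamma}$ is the largest Suslin pointclass of $D$.

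The contradiction therefore comes from producing a new Suslin representation, not from counting Wadge ranks. Your proposal has no counterpart to the hull, the fullness lemma, or the measure game.
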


\begin{remark}
    Note $V = L(Lp^\Sigma(\R))$ implies any set of reals is $OD(\Sigma,x)$ for some $x\in\R$. In particular, our hypothesis implies $D(V,\kappa) \nvDash AD_\R$. If $\kappa$ is a regular limit of Woodin cardinals and $D(V,\kappa) \models AD_\R$, then $cof(\Theta^{D(V,\kappa)}) = \kappa$.
\end{remark}

\begin{proof}[Proof of Theorem \ref{cof at least kappa}]
    Let $D = D(V,\kappa, G)$ and $\Theta = \Theta^D$. Let $\R^* = \R^D = \bigcup_{\zeta<\kappa} \R^{V[G\upharpoonright\zeta]}$. Suppose the theorem fails. So $D \nvDash AD_\R$ and $cof(\Theta) < \kappa$. Let $\mathbf{\Gamma}$ be the largest Suslin pointclass of $D$ and let $\delta$ be the largest Suslin cardinal of $D$. Let $T\in D$ be a tree projecting to a universal $\mathbf{\Gamma}$-set $U$. Let $(P,\Sigma)$ be a hod pair in $D$ such that $D \models V = L(Lp^\Sigma(\R))$. $T,P,\Sigma \in V[G\upharpoonright \zeta]$ for some $\zeta < \kappa$, but, collapsing $\zeta$ if necessary, we may assume $T,P,\Sigma \in V$.
    Consider $X \prec V_{\kappa^{++}}$ such that
    \begin{enumerate}
        \item $X \cap \kappa = \kappa_X \in On$,
        \item $X^{<\kappa_X} \subset X$,
        \item $X$ is cofinal in $\Theta$, and
        \item $T\in X$.
    \end{enumerate}

    Say $X\prec V_{\kappa^{++}}$ is good if it satisfies the above properties. Let $\pi_X:N_X \to X \prec V$ be the anti-collapse embedding. Since $\kappa$ is regular, $\pi_X$ can be extended to a map $\pi_X: N_X[G\upharpoonright\kappa_X] \to V_{\kappa^{++}}[G]$. Let $D_X = \pi_X^{-1}(D)$, $\Theta_X = \pi_X^{-1}(\Theta)$, $\mathbf{\Gamma}_X = \pi_X^{-1}(\mathbf{\Gamma})$, $T_X = \pi_X^{-1}(T)$ and $\delta_X = \pi_X^{-1}(\delta)$. Let $\R_X = \pi_X^{-1}(\R^*) = \bigcup_{\zeta < \kappa_X} \R^{V[G\upharpoonright\zeta]}$. Note $D_X|\Theta \trianglelefteq (Lp^\Sigma(\R_X))^{V[G]}$.

    \begin{lemma}
    \label{fullness}
        $D_X|\Theta_X =  (Lp^\Sigma(\R_X))^{V[G]}$
    \end{lemma}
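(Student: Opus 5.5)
We prove the nontrivial inclusion $(Lp^\Sigma(\R_X))^{V[G]} \subseteq D_X|\Theta_X$; the reverse inclusion is the remark preceding the lemma. Fix a countable-in-the-right-sense sound $\Sigma$-premouse $\mathcal{M}$ over $\R_X$ in $V[G]$ which projects to $\R_X$ and all of whose countable elementary substructures have $\omega_1+1$-iteration strategies (for $\Sigma$-premice) in $V[G]$. We aim to show $\mathcal{M} \in D_X$, and in fact that $\mathcal{M}\triangleleft Lp^\Sigma(\R_X)^{D_X}$.

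The plan is a reflection argument through $\pi_X$. The key point is that $\R_X$ is the union of $\R^{V[G\upharpoonright\zeta]}$ for $\zeta<\kappa_X$, and $\pi_X$ is the identity on $\R_X$. Since $\kappa$ is regular, $\R_X \in V[G\upharpoonright\kappa_X]$, and $\mathcal{M}$, being coded by a subset of $\R_X$ essentially, has size $\kappa_X$. We use the closure $X^{<\kappa_X}\subseteq X$ and the fact that $\kappa_X$ is a limit of Woodins in $N_X$ (hence, by elementarity and closure, really a limit of Woodins in $V$ when $\kappa_X$ is; otherwise argue inside $N_X$) to see that $\mathcal{M}$ is an $\R_X$-premouse lying in $N_X[G\upharpoonright\kappa_X]$ via its countable pieces.

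The first step is as follows. For each $\zeta<\kappa_X$, let $\mathcal{M}_\zeta$ be the relativized hull of $\mathcal{M}$ over $\R^{V[G\upharpoonright\zeta]}$. Then $\mathcal{M}_\zeta$ is countable in $V[G]$ and its transitive collapse is coded by a real in $\R^*$. By super-small $\Gamma$-$\Sigma^*$-mouse capturing in $D$ on a cone, the iterability of such countable $\Sigma$-mice is witnessed inside $D$: their strategies restricted to countable trees are in $D$, since they are below the largest Suslin pointclass level. Consequently each such real code is in an $OD^D_{\Sigma,x}$ set of reals of Wadge rank below $\Theta$, and since $X$ is cofinal in $\Theta$, this rank is bounded by an element of $X$.

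The second step is to pull back via $\pi_X$. The statement ``there is a $\Sigma$-mouse over $\R_X$ in $Lp^\Sigma$ coding the sequence of these hulls'' reflects. We use elementarity of $\pi_X: N_X[G\upharpoonright\kappa_X]\to V_{\kappa^{++}}[G]$ and the fact that the relevant strategies (restrictions of $\Sigma$ and of the mouse strategies to $N_X$) are moved correctly by $\pi_X$, because $\Sigma$ is in $X$ and the strategies are guided by $\Gamma$-sets with codes in $X$. From this we conclude that $\mathcal{M}$ is in $D_X$ and is $\Sigma$-iterable there. Hence $\mathcal{M}\triangleleft Lp^\Sigma(\R_X)^{D_X}=D_X|\Theta_X$, where the last equality is elementarity applied to $D\models V=L(Lp^\Sigma(\R))$ together with $\Theta$ being the height of $Lp^\Sigma(\R^*)$ in $D$.

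The main obstacle is the second step: showing that $V[G]$-iterability of $\mathcal{M}$ implies $\Sigma$-iterability in $D_X$. That is, the strategy for $\mathcal{M}$ must be captured below $\Theta_X$. I would first try to derive this from mouse capturing by comparing $\mathcal{M}$ against the $\pi_X$-preimages of $\Gamma$-Suslin representations of its strategy, which exist because $\mathcal{M}$'s strategy is Suslin-co-Suslin in $V[G]$ via the Woodins below $\kappa$. Countable closure of $X$ then ensures that the pulled-back tree projects correctly.
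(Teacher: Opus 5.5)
\textbf{There is a genuine gap.} The step you flag as the main obstacle (that $V[G]$-iterability of $\mathcal{M}$ yields $\mathcal{M}\in D_X$ with an iteration strategy there) is the entire content of the lemma, and nothing in the proposal supplies it.

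\emph{Step 1 uses mouse capturing in the wrong direction.} You invoke super-small $\Gamma$-$\Sigma^*$-mouse capturing to put the $V[G]$-strategies of the countable hulls into $D$. But mouse capturing says, roughly, that sets in $C_\Gamma(x)$ are captured by $Lp^\Sigma(x)$ \emph{as computed in $D$}. It says nothing about a premouse that merely happens to be countably iterable in $V[G]$. The paper's proof of this lemma does not use mouse capturing at all. Rather, the lemma is later combined with mouse capturing, in the subclaim that $\bar{\mu}_1\in D_X$, to pass from $Lp^\Sigma(\R_X)$ computed in $D$ into $D_X$.

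\emph{Other unsupported steps.} The claim that the strategy of $\mathcal{M}$ is Suslin-co-Suslin in $V[G]$ ``via the Woodins below $\kappa$'' has no justification. Step 2 gives elementarity nothing to act on: $\pi_X$ is the identity on $\R_X$, and you have not shown $\mathcal{M}\in N_X[G\upharpoonright\kappa_X]$. Since $\mathcal{M}$ has size $\kappa_X$ in $V[G\upharpoonright\kappa_X]$, the $<\kappa_X$-closure of $X$ does not give this, let alone that $D_X$ sees $\mathcal{M}$'s iterability.

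\emph{The missing ingredient.} Your argument never uses the only available link between $V[G]$ and the derived model. That link is maximality: every $A\subseteq\R^*$ in $V(\R^*)$ with $L(A,\R^*)\models AD^+$ lies in $D$. Together with $D\models V=L(Lp^\Sigma(\R))$, this gives $D\cap P(\R^*)\subseteq D|\Theta$.

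\textbf{How the paper argues.} It works by contradiction at the level of $\R^*$, not $\R_X$.
\begin{enumerate}
\item Let $M_X$ be the least level of $Lp^\Sigma(\R_X)^{V[G]}$ projecting to $\R_X$ and properly extending $D_X|\Theta_X$. Set $M^*_X=Ult(M_X,E_X)$, where $E_X$ is the length-$\Theta$ extender derived from $\pi_X$. Because $X$ is cofinal in $\Theta$, $M^*_X$ is a $\Sigma$-premouse over $\R^*$ properly extending $D|\Theta$ and projecting to $\R^*$. This is the real use of cofinality, which plays no role in your sketch.
\item Wellfoundedness of $M^*_X$ comes from the $S$-construction $\H_X$.
\item Countable iterability in $V[G]$ comes from comparing $\H^*_X$ with $\H^*_Y$ for other good $Y$ to get $M^*_X=M^*_Y$. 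Given a countable hull of $M^*_X$, one chooses $Y$ so that the hull is fixed by $\pi_Y$; it is then a hull of the iterable $M_Y$. Hence $M^*_X\triangleleft Lp^\Sigma(\R^*)^{V[G]}$.
\item The scales analysis of $Lp^\Sigma(\R^*)$ shows $L(A,\R^*)\models AD^+$ for every $A\in Lp^\Sigma(\R^*)\cap P(\R^*)$. If there were sets of reals beyond $\mathbf{Env}(\mathbf{\Gamma})$, one would obtain a scale on $U^c$, hence a tree for $U^c$ in some $V[G\upharpoonright\gamma]$ with $\gamma<\kappa$. That would make the universal $\mathbf{\Gamma}$-set Suslin-co-Suslin in $D$. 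Otherwise $L(Lp^\Sigma(\R^*))\models AD^+$ directly.
\item So a new set of reals definable over $M^*_X$ lies in $D$ by maximality, contradicting $D\cap P(\R^*)\subseteq M^*_X$.
\end{enumerate}
To repair your proposal you need this lifting to $\R^*$, or some other way to bring in the maximality of $D$. None of the hypotheses supports a direct downward transfer of iterability from $V[G]$ to $D_X$.
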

    \begin{proof}
        Suppose not. Let $M_X$ be the least initial segment of $(Lp^\Sigma(\R_X))^{V[G]}$ such that $\rho(M_X) = \R_X$ and $M_X\triangleright D_X|\Theta_X$. Note $M_X\models AD^+$. Let $E_X$ be the extender of length $\Theta$ derived from $\pi_X$ and let $M_X^* = Ult(M_X,E_X)$.

        Let $\H_X = \H^{M_X}$. I.e., letting $Th\subset \Theta_X$ be the $\Sigma_1$-theory of $M_X$ with parameters in $\Theta_X \cup \{P\}$, $\H_X$ is the $S$-construction (relative to $\Sigma$) in $M_X$ over $Th$.\footnote{The precise definition may be found in Definition 4.6 of \cite{sihmor}.} By Theorem 4.10 of \cite{sihmor}, $\H_X$ is countably iterable above $\Theta_X$ (in $V[G]$).

        Let $\H^*_X = Ult(\H_X, E_X)$. $\H^*_X$ is wellfounded, since $H_X\in V$, so $\H^*_X$ embeds into $\pi_X(\H_X)$. But $On \cap \H^*_X = On \cap M^*_X$, so this implies $M^*_X$ is wellfounded.

        \begin{claim}
        \label{M^*_X iterable}
            $M_X^*$ is countably iterable in $V[G]$ (as a $\Sigma$-premouse).
        \end{claim}
        \begin{proof}
            Let $Y\prec V_{\kappa^{++}}$ be good. Pick $Z\prec V_{\kappa^{++}}$ such that $\{\H^*_X,\H^*_Y,P,\Sigma\} \subset Z$ and $|Z| < min(\kappa_X,\kappa_Y)$. Let $\pi_Z:N_Z \to Z$ be the anticollapse embedding. Let $\H^*_{X,Z} = \pi_Z^{-1}(\H^*_X)$ and $\H^*_{Y,Z} = \pi_Z^{-1}(\H^*_Y)$.

            \begin{subclaim}
                $H^*_{X,Z}$ and $H^*_{Y,Z}$ are countably iterable above $\Theta_Z = \pi_Z^{-1}(\Theta)$ (as $\Sigma$-premice in $V[G]$).
            \end{subclaim}
            \begin{proof}
                We show the subclaim for $\H^*_{X,Z}$. Since $X^{<\kappa_X} \subset X$, $E_X$ is $<\kappa_X$-complete. Then, for any elementary substructure $U\prec \H^*_X = Ult(\H_X,E_X)$ with $|U|<\kappa_X$, if $f:N_U\to U\prec \H^*_X$ is the anticollapse embedding, then there is $\tau:N_U \to \H_X$ such that $\pi_{E_X}\circ \tau(x) = f(x)$ whenever $x\in N_U$ is such that $f(x) \in range(\pi_{E_X})$.\footnote{This is essentially Lemma 8.12 of \cite{fine_structure}} In particular, since $|Z|<\kappa_X$, there is $\tau:\H^*_{X,Z} \to \H_X$ such that $\tau(\Theta_Z) = \Theta_X$. Then, as $\H_X$ is countably iterable above $\Theta_X$ (in $V[G]$), $\H^*_{X,Z}$ is countably iterable above $\Theta_Z$.
            \end{proof}

            $\H^*_X|\Theta = \H^*_Y|\Theta$, so $\H^*_{X,Z}|\Theta_Z = \H^*_{Y,Z}|\Theta_Z$. Then by the subclaim, either $\H^*_{X,Z} \trianglelefteq \H^*_{Y,Z}$ or $\H^*_{Y,Z} \trianglelefteq \H^*_{X,Z}$. By elementarity of $\pi_Z$, $\H^*_{X} \trianglelefteq \H^*_{Y}$ or $\H^*_{Y} \trianglelefteq \H^*_{X}$. 
            
            Let $Vop_\omega$ be the version of the Vopenka algebra described in Section 3 of \cite{scales_in_k}. Let $\dot{R}$ be as defined on p.185 of \cite{scales_in_k}. Note $Vop_\omega$ and $\dot{R}$ are in $\H^*_X \cap \H^*_Y$. Let $F$ be $Vop_\omega$-generic over both $\H^*_X$ and $\H^*_Y$ such that $\dot{R}^F = \R^*$.\footnote{Lemmas 3.4 and 3.5 of \cite{scales_in_k} imply such an $F$ exists.} 
            
            Suppose that $\H^*_X = \H^*_Y|\xi$ for some $\xi$. By Lemma 4.8 of \cite{sihmor}, $M^*_X$ is definable over $\H^*_X[F]$ from $F$ and $M^*_Y|\xi$ is definable over $\H^*_Y|\xi[F]$ from $F$ by the same definition.\footnote{That the definitions of $M^*_X$ and $M^*_Y|\xi$ in these models are the same is not explicitly stated in Lemma 4.8, but is clear from the proof.} Thus $M^*_X\trianglelefteq M^*_Y$. Similarly, $\H^*_Y\trianglelefteq \H^*_X$ implies $M^*_Y\trianglelefteq M^*_X$.

            We have shown $M^*_X\trianglelefteq M^*_Y$ or $M^*_Y\trianglelefteq M^*_X$. But no proper initial segment of $M^*_X$ extending $D$ projects to $\Theta$ (and similarly for $M^*_Y$), so $M^*_X = M^*_Y$.

            Suppose $N$ is a countable hull of $M^*_X$ (in $V[G]$). Pick $Y\prec V_{\kappa^{++}}$ be such that $X\in Y$, $X\subset Y$, and $Y$ is good. We may also choose $Y$ such that $N$ is countable in $N_Y[G\upharpoonright\kappa_Y]$. Since $M^*_Y = M^*_X$, $N$ is a countable hull of $M^*_Y$. Then by elementarity, and that $\pi_Y(N) = N$, $N$ is a countable hull of $M_Y$. $M_Y$ is countably iterable in $V[G]$, so $N$ is as well.
        \end{proof}

        \begin{claim}
        \label{scales analysis}
            Let $A\subset \R^*$ be definable over $M^*_X$ such that $A\notin M^*_X$. Then $L(A,\R^*)\models AD^+$.
        \end{claim}
        \begin{proof}
            Work in $V[G]$. Claim \ref{M^*_X iterable} implies $M^*_X \triangleleft Lp^\Sigma(\R^*)$. So it suffices to show if $A\in Lp^\Sigma(\R^*)$, then $L(A,\R^*)\models AD^+$. We split into two cases, although we will show the first case cannot occur.

            Case 1: $\textbf{Env}(\mathbf{\Gamma}) \subsetneq Lp^\Sigma(\R^*) \cap P(\R^*)$.

            Let $\beta$ be minimal such that there is $A\subset \R^*$ which is definable over $Lp^\Sigma(\R^*)|\beta$ so that $A\notin \textbf{Env}(\mathbf{\Gamma})$. By Theorem 3.2.4 of \cite{trevorthesis}, $Lp^\Sigma(\R^*)|\beta\models AD$. Let $\beta_0$ be such that $[\delta,\beta_0]$ is a $\Sigma_1$-gap in $Lp^\Sigma(\R^*)$. 

            If $\beta_0 < \beta$, then $Lp^\Sigma(\R^*)|\beta_0+1\models AD$ and $\beta_0+1$ begins a $\Sigma_1$-gap. Then Section 5.1 of \cite{sihmor} shows $Lp^\Sigma(\R^*)|\beta_0+1 \models r\Sigma_1^{Lp^\Sigma(\R^*)|\beta_0+1}$ is scaled.\footnote{This is by Theorem 5.1 of \cite{sihmor} if $Lp^\Sigma(\R^*)|\beta_0+1$ is passive and by Theorem 5.9 in the case that $Lp^\Sigma(\R^*)|\beta_0+1$ is ``$P$-active.'' \cite{sihmor} neglects to mention the case that the beginning of a gap is ``$E$-active,'' but this is by the proof of Theorem 5.1.} In particular, there is a scale for $U^c$ definable over $Lp^\Sigma(\R^*)|\beta_0+1$.

            If $\beta_0 = \beta$, then $[\delta,\beta]$ is an admissible $\Sigma_1$-gap and $Lp^\Sigma(\R^*)|\beta\models AD$. Let $n$ be minimal such that $\rho^{Lp^\Sigma(\R^*)|\beta}_{n+1} = \omega$. If $[\delta,\beta]$ is a weak gap, then by Theorem 5.26 of \cite{sihmor}, $Lp^\Sigma(\R^*)|\beta \models r\Sigma_{n+1}^{Lp^\Sigma(\R^*)|\beta}$ is scaled.\footnote{It is clear the hypotheses of Theorem 5.26 are satisfied. Note in this case $\beta$ ends the $\Sigma_1$-gap $[\delta,\beta]$ and $\Sigma \in Lp^\Sigma|\delta$, since $\Sigma$ is Suslin-co-Suslin.} In particular, a scale for $U^c$ is definable over $Lp^\Sigma(\R^*)|\beta$. $[\delta,\beta]$ cannot be a strong gap, since this would imply $Lp^\Sigma(\R^*)|\beta+1 \cap P(\R^*) \subseteq \textbf{Env}(\mathbf{\Gamma})$,\footnote{See p. 46 of \cite{trevorthesis}. ($2'$) on that page gives the result for a level of the $J$-hierarchy, but the proof works for $Lp^\Sigma$ as well.} contradicting our definition of $\beta$.

            Similarly, if $\beta_0 >\beta$, then $Lp^\Sigma(\R^*)|\beta_0 \cap P(\R^*) \subseteq \textbf{Env}(\mathbf{\Gamma})$, contradicting our choice of $\beta$.

            We have shown there is a scale for $U^c$ definable over an initial segment of $Lp^\Sigma(\R^*)$. In particular, there is a tree $S\in V(\R^*)$ such that $\rho[S] = U^c$. Since $S$ is coded by a set of ordinals and $S\in V(\R^*)$, there is $\gamma < \kappa$ such that $S\in V[G\upharpoonright\gamma]$. Then $T$ and $S$ witness that $U \in Hom^*$. This implies $U$ is Suslin-co-Suslin in $D$, a contradiction.

            Case 2: $\textbf{Env}(\mathbf{\Gamma}) = Lp^\Sigma(\R^*) \cap P(\R^*)$.

            Every set in $\textbf{Env}(\mathbf{\Gamma})$ is determined (see Theorem 3.2.4 of \cite{trevorthesis}). But $L(Lp^\Sigma(\R^*)) \cap P(\R^*) = Lp^\Sigma(\R^*) \cap P(\R^*)$, so $L(Lp^\Sigma(\R^*)) \models AD$. And $L(Lp^\Sigma(\R^*))$ satisfies ``$\Sigma_1$-reflection to Suslin-co-Suslin.''\footnote{For suppose $\phi(u,v)$ is a $\Sigma_1$ formula and $L(Lp^\Sigma(\R)) \models \exists C\subseteq P(\R^*) \phi(C,\R^*)$. Then there is $C \in Lp^\Sigma(\R^*)|(\delta^2_1)^{Lp^\Sigma(\R^*)}$ such that $L(Lp^\Sigma(\R^*)) \models \psi(C,\R^*)$. Such a $C$ is Suslin-co-Suslin (again by the scales analysis of \cite{sihmor}).} But $AD^+$ is equivalent to $ZF\, +\, AD \,+\, V = L(P(\R))\, +$  ``$\Sigma_1$-reflection to Suslin-co-Suslin,'' so $L(Lp^\Sigma(\R^*))\models AD^+$. Any inner model of a model of $AD^+$ containing all its reals also satisfies $AD^+$, so $L(A,\R^*)\models AD^+$ for every $A\in P(\R^*) \cap Lp^\Sigma(\R^*)$.
        
        \end{proof}

        Let $A\subset \R^*$ be as in the claim above. Note $M^*_X\in V(\R^*)$, since $M^*_X \triangleleft Lp^\Sigma(\R^*)$.
        Thus, the claim implies $A \in D$. But $A\notin M^*_X$ and $M^*_X \supseteq D \cap P(\R^*)$, a contradiction.\\
    \end{proof}

    Let $\sigma = \pi_X''((meas^{\mathbf{\Gamma}_X}(\delta_X^{<\omega}))^{D_X}) \subset (meas^{\mathbf{\Gamma}}(\delta^{<\omega}))^D$. Note $\sigma$ is countable in $V[G]$. Then we may consider the game $G^\sigma_T$.
    
    \begin{claim}
        Player II has a winning strategy in $HOD^{V[G]}_{V\cup\R^*}$ for the game $G^\sigma_T$.
    \end{claim}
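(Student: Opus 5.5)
The approach is to imitate Claim \ref{PII wins} from the proof of Theorem \ref{weakly cpt thm}, with the hull embedding $\pi_X$ now playing the role that $j$ played there. I will first replace $G^\sigma_T$ by an equivalent closed game lying in $V$. Then I will build a winning strategy for Player II in $V$, and finally transfer it to $HOD^{V[G]}_{V\cup\R^*}$ by absoluteness of closed games.

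\textbf{Coding the game.} By Proposition 3.5.5 of \cite{trevorthesis} there is a wellorder $\leq^*$ of $(meas^{\mathbf{\Gamma}}(\delta^{<\omega}))^D$ that lies in $D$ and is definable in $V[G]$ from $T$. Since $T\in X$, the map $\pi_X$ respects this definability. The game $G_{\leq^*}$ is the same as $G^\sigma_T$, except that Player II plays ordinals coding measures in $\sigma$ relative to $\leq^*$. The set $Z$ of such codes is in $V$, because it is computed from $\pi_X$ and $T$, and the coding commutes with $\pi_X$. The game $G_{\leq^*}$ is closed for Player II and belongs to both $V$ and $HOD^{V[G]}_{V\cup\R^*}$. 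So a winning strategy for Player II in $V$ transfers, and it decodes to a winning strategy for $G^\sigma_T$ in the HOD.

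\textbf{The strategy.} We are at a position $\langle (n_j,\alpha_j,h_j,\zeta_j):j\le i\rangle$, with $\alpha_j<\delta$. The naive idea is to set $\bar\mu_{i+1}=\{A\in P(\delta_X^{i+1})\cap D_X : \langle\alpha_0,\dots,\alpha_i\rangle\in\pi_X(A)\}$ and let $\mu_{i+1}=\pi_X(\bar\mu_{i+1})\in\sigma$. The projection and concentration requirements are then verified exactly as in Subclaims \ref{mu projects correctly} and \ref{mu concentrates on T}, using $T=\pi_X(T_X)$.

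\textbf{Main obstacle: $\bar\mu_{i+1}\in D_X$.} The step I expect to be hardest is showing that $\bar\mu_{i+1}$ actually belongs to $D_X$. This is where Lemma \ref{fullness} enters. The measure $\bar\mu_{i+1}$ is countably complete, because it is derived from $\pi_X$ and $\pi_X$ is closed under $<\kappa_X$-sequences. It also lives in $N_X[G\upharpoonright\kappa_X]$, or at least in $V[G]$ over $\R_X$. Applying Lemma 3.5.4 of \cite{trevorthesis}, the code set of $\bar\mu_{i+1}$ lies in $\mathbf{Env}(\mathbf{\Gamma}_X)$ computed in a model with reals $\R_X$. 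This is the step I am least sure of; the plan is as follows. That envelope sits inside $(Lp^\Sigma(\R_X))^{V[G]}$, via the mouse capturing hypothesis and the scales analysis used in Claim \ref{scales analysis}. By Lemma \ref{fullness}, this collapses to $D_X|\Theta_X$, which gives $\bar\mu_{i+1}\in D_X$.

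\textbf{Winning.} Suppose toward a contradiction that Player I never violates the rules, and fix the resulting play. Closure $X^{<\kappa_X}\subset X$, with $\operatorname{cof}(\Theta)<\kappa$ used to ensure the relevant $\omega$-sequences are captured, puts $\langle\zeta_i\rangle$ and $\langle h_i\rangle$ in the range of $\pi_X$. So $\bar\mu=\langle\bar\mu_i\rangle\in N_X[G\upharpoonright\kappa_X]$ is illfounded there. On the other hand, given $\langle X_i\rangle$ with $X_i\in\bar\mu_i$, the sequence $f(i)=\alpha_i$ is a branch through the $\pi_X(X_i)$. Pulling this back by elementarity shows $\bar\mu$ is wellfounded, which is a contradiction.
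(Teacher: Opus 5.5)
\textbf{There is a genuine gap at the decisive step, $\bar\mu_{i+1}\in D_X$.} Your outer architecture matches the paper: code the game in $V$, define $\bar\mu_{i+1}$ from $\pi_X$, check projection and concentration, and pull back an illfounded tower. The problem is the membership step.

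Lemma 3.5.4 of \cite{trevorthesis}, applied in a model with reals $\R_X$ such as $V[G\upharpoonright\kappa_X]$, only puts the code set into the envelope computed in that model. Nothing in the hypotheses controls that envelope. In the weakly compact proof the envelope step worked because $D$ was essentially the true derived model, since $N\prec V$ contains $\Theta^{D(V,\kappa)}$. Here $D_X=\pi_X^{-1}(D)$ is a collapsed hull, not the derived model of $V$ at $\kappa_X$. The hypotheses $V=L(Lp^\Sigma(\R))$ and super-small mouse capturing are assumed only in $D$.

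Mouse capturing is a statement inside $D$, where $\R_X$ is just a countable set of reals. It applies to sets in $C_\Gamma(\R_X)$, which via Lemma 5.24 of \cite{sihmor} means sets ordinal definable in $D$ from $\Sigma$, $\R_X$ and elements of $\R_X$. Knowing that an envelope set is OD in $V[G\upharpoonright\kappa_X]$ from $U_X$ and a real does not give this. Moreover, $\bar\mu_{i+1}$ is defined from the external map $\pi_X$, which is not in $D$.

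Also, the scales analysis of Claim \ref{scales analysis} points the wrong way for you. It shows that sets in $Lp^\Sigma(\R^*)$ generate $AD^+$ models and hence lie in the derived model. It says nothing about envelope sets lying in $Lp^\Sigma(\R_X)$.

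\textbf{The missing idea} is to see $\bar\mu_{i+1}$ as OD in $D$ from $\Sigma$, $\R_X$ and a real. The paper does this with the coding games from the proof of Theorem 28.15 of \cite{kanamori}:
\begin{enumerate}
\item Take a surjection $f:\R_X\to\delta_X$ definable in $D_X$ from $\Sigma_X$ and a real $x$. Then $\pi_X(f)$ is definable in $D$ from $\Sigma$ and $x$.
\item A real $y\in\R_X$ that wins $G^f_A$ also wins $G^{\pi_X(f)}_{\pi_X(A)}$, and this pins down $\pi_X(A)$ uniquely.
\item Hence $\pi_X\upharpoonright P(\delta_X)^{D_X}$ is OD in $D$ from $\{\R_X,f,\pi_X(f)\}$. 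Since $f$ is itself OD in $D$ from parameters in $\{\R_X,\Sigma\}\cup\R_X$, so is $\bar\mu_{i+1}$ (the $\alpha_j$ are just ordinal parameters).
\end{enumerate}
Only then does the chain go through: Lemma 5.24 gives $\bar\mu_{i+1}\in C_\Gamma(\R_X)$, super-small mouse capturing gives $\bar\mu_{i+1}\in Lp^\Sigma(\R_X)$, and Lemma \ref{fullness} gives $\bar\mu_{i+1}\in D_X$.

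\textbf{A smaller point in your final paragraph.} You neither need nor can generally get $\langle h_i\rangle$ in the range of $\pi_X$. The $h_i$ witness illfoundedness of $\vec\mu$ in the target, and only the codes $\langle\zeta_i\rangle$ must be pulled back, using countable closure of $N_X$. The hypothesis $cof(\Theta)<\kappa$ is used to obtain good $X$ cofinal in $\Theta$, which is needed for Lemma \ref{fullness}, not to capture $\omega$-sequences.
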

    \begin{proof}
        We argue as in the proof of Claim \ref{PII wins} of Theorem \ref{weakly cpt thm}. Suppose $\langle (n_j,\alpha_j,h_j,\mu_j): j \leq i\rangle$ is a partial play of $G^\sigma_T$. For $A\in P(\delta_X)^D$, let $A\in \bar{\mu}_{i+1}$ if and only if $\langle \alpha_0,...,\alpha_i\rangle \in \pi_X(A)$. Let $\mu_{i+1} = \pi_X(\bar{\mu}_{i+1})$.

        \begin{subclaim}
        \label{measure is valid play}
            $\bar{\mu}_{i+1}\in (meas^{\mathbf{\Gamma_X}}(\delta_X^{<\omega}))^{D_X}$ (and therefore $\mu_{i+1}\in \sigma$).
        \end{subclaim}
        \begin{proof}
            For ease of notation, we prove the case $\bar{\mu}_1\in (meas^{\mathbf{\Gamma_X}}(\delta_X^{<\omega}))^{D_X}$. Certainly, $\bar{\mu}_1$ is countably complete, so it suffices to show $\bar{\mu}_1\in D_X$.
            
            Let $f:\R_X\to \delta_X$ be a surjection definable in $D_X$ from $\Sigma_X$ and some $x\in\R_X$.\footnote{Any set in $D$ is ordinal definable from $\Sigma$ and some $x\in\R^*$. Then by elementarity, any set in $D_X$ is ordinal definable in $D_X$ from $\Sigma_X$ and some $x\in\R_X$. Minimizing the ordinal parameters in the definition, we can find such an $f$ definable in $D_X$ from $\Sigma_X$ and $x$.} Note $\pi_X(f):\R^*\to \delta$ is then definable in $D$ from $\Sigma$ and $x$. For $A\in P(\delta_X)^{D_X}$, let $G^f_A$ be the game (in $D_X$) from the proof of Theorem 28.15 of \cite{kanamori}. Similarly, for $B\in P(\delta)^D$, let $G^{\pi_X(f)}_B$ be the analogous game played in $D$, defined relative to the surjection $\pi_X(f): \R^* \to \delta$.
            
            If $y\in\R_X$ is a winning strategy for Player I (Player II) in $G^f_A$, then, by elementarity of $\pi_X$, $y$ is also a winning strategy for Player I (Player II) in $G^{\pi_X(f)}_{\pi_X(A)}$. And $\pi_X(A)$ is the unique subset of $P(\delta)$ in $D$ such that $y$ is a winning strategy in $G^{\pi_X(f)}_{\pi_X(A)}$.\footnote{Again see the proof of Theorem 28.15 of \cite{kanamori}.} So $\pi_X(A) = B$ if and only if there is $y\in \R_X$ such that $y$ is a winning strategy for $A$ in $G^f_A$ and also a winning strategy for $B$ in $G^{\pi_X(f)}_B$. Thus $\pi_X\upharpoonright P(\delta_X)^{D_X}$ is ordinal definable in $D$ from $\{\R_X,\pi_X(f), f\}$. Since $D \models V = L(Lp^\Sigma(\R^*))$, $f\in (Lp^\Sigma(\R_X)))^D$, so $f$ is ordinal definable in $D$ from parameters in $\{\R_X,\Sigma\} \cup \R_X$. Thus $\pi_X\upharpoonright P(\delta_X)^{D_X}$, and therefore $\bar{\mu}_1$, is ordinal definable in $D$ from parameters in $\{\R_X,\Sigma\} \cup \R_X$. Then by Lemma 5.24 of \cite{sihmor}, $\bar{\mu}_1 \in C_\Gamma(\R_X)$.  Then super-small $\Gamma$-$\Sigma^*$ mouse capturing gives $\bar{\mu}_1\in Lp^\Sigma(\R_X)$. Then by Lemma \ref{fullness}, $\bar{\mu}_1\in D_X$.
        \end{proof}

        The rest of the proof of the claim is just as in the proof of Claim \ref{PII wins}.
    \end{proof}

    The claim gives a contradiction just as in the proof of Theorem \ref{weakly cpt thm}.
\end{proof}

\begin{remark}
\label{proving super-small mc}
    Assumption (\ref{super-small msc}) in Theorem \ref{cof at least kappa} is not necessary below superstrongs. For let $U$ be the Martin measure on Turing degrees (in $D$). Let $T^*$ be the tree in $Ult(D,U)$ represented by $[x\mapsto T]_U$. Then $D = L(T^*,\R^*)$.\footnote{$\supseteq$ is clear. $\subseteq$ is by Claim 4 of Theorem 2.3 of \cite{consistency_strength_adr}, plus that $D\models V = L(P(\R))$.} Then by assumption (\ref{smsc}), $L(T^*,\R^*)\models V = L(Lp^\Sigma(\R^*))$. Then for $U$-measure-one many $x\in\R^*$, $L(T,\sigma_x) \models V = L(Lp^\Sigma(\sigma_x))$ (where $\sigma_x = \{y\in\R^*: y<_T x\}$). Fix $y_0\in \R^*$ such that $x\geq_T y_0 \implies L(T,\sigma_x) \models V = L(Lp^\Sigma(\sigma_x))$.

    Let $X$ be as in the proof of Theorem \ref{cof at least kappa}. We may assume $y_0\in \R_X$. Let $x\in \R^*$ be Sacks generic over $D_X$.\footnote{We can find such an $x$ in $\R^*$ since $D_X$ is countable in $V[G]$.} Then $\R_X = \sigma_x$ and $L(T,\R_X) \models V = L(Lp^\Sigma(\R_X))$. It follows that $L(T,\R_X) \cap P(\R_X) \subseteq (Lp^\Sigma(\R_X))^D$.\footnote{Why? We need to show if $N \triangleleft (Lp^\Sigma(\R_X))^{L(T,\R_X)}$, then $N$ is countably iterable in $D$, so that $N \triangleleft (Lp^\Sigma(\R_X))^D$. But by elementarity of the ultrapower map $j_U: L(T,\R_X)\to L(T^*,\R^*)$, $j_U(N)$ is countably iterable in $D= L(T^*,\R^*)$. Since $N$ embeds into $j_U(N)$, $N$ is also countably iterable in $D$.} $C_\Gamma(\R_X) = L(T,\R_X)$ by Theorem 3.4 of \cite{steel2016}, so $C_\Gamma(\R_X) \subseteq Lp^\Sigma(\R_X)$. This is our use of assumption (2) in Subclaim \ref{measure is valid play}.
\end{remark}

\begin{theorem}
\label{derived model not Lp^Sigma}
    Assume $PFA + \kappa$ is a regular limit of Woodin cardinals. Then, for any hod pair $(P,\Sigma)$ in $D(V,\kappa)$, $D(V,\kappa)$ does not satisfy $V = L(Lp^\Sigma(\R^*))$ $+$ ``super-small $\Gamma$-$\Sigma^*$-mouse capturing holds on a cone.''
\end{theorem}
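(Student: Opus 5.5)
Suppose toward a contradiction that $PFA$ holds, $\kappa$ is a regular limit of Woodin cardinals, and there is a hod pair $(P,\Sigma)$ in $D = D(V,\kappa)$ such that $D$ satisfies $V = L(Lp^\Sigma(\R^*))$ and super-small $\Gamma$-$\Sigma^*$-mouse capturing on a cone. We will combine Theorem \ref{cof at least kappa} with consequences of $PFA$ for successors of singular cardinals, together with the upper bound on $\Theta^D$.

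The plan has three steps.

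\begin{enumerate}
\item By Theorem \ref{cof at least kappa}, $cof(\Theta^{D})\geq \kappa$ in $V$. By the remark following that theorem, our hypotheses also give $D\nvDash AD_\R$, so $D\models \Theta = \Theta_{\iota+1}$ for some $\iota$.

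\item Next we bound $\Theta^D$ from above. I would first try to show $\Theta^D<\kappa^+$. The hypothesis $V = L(Lp^\Sigma(\R^*))$ in $D$ rules out $LSA$: under $LSA$ the largest Suslin pointclass would not be captured by a single $Lp^\Sigma$ of the required kind. If $cof(\kappa)$ were $\omega$, the corollary to Theorem \ref{result from cp} would apply directly. Here, however, $\kappa$ is regular, so a different bound is needed. We know $\kappa=\omega_1^D$ and $\Theta^D\leq \kappa^+$, since every set of reals in $D$ comes from $V[G\upharpoonright\zeta]$ and $|P(\R^*)|^{V[G]}\leq\kappa^+$.

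\item Given these bounds, $cof(\Theta^D)\in\{\kappa,\kappa^+\}$ or $\Theta^D=\kappa^+$. We then split into cases.
\begin{enumerate}
\item \emph{Case $\Theta^D=\kappa^+$.} We aim for a contradiction via $PFA$ reflection, specifically the failure of $\square_\kappa$ and the validity of $CP(\kappa^+,\omega)$ at the regular $\kappa$. The matrix construction in the proof of Theorem \ref{result from cp} can be adapted: for each $\beta<\kappa^+$, take $f_\beta$ ordinal definable from $\Sigma$ and a real, and set $K(\alpha,\beta)=f_\beta''[\R^{V[G\upharpoonright \alpha]}]$ for $\alpha<\kappa$. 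This produces a $\kappa$-indexed covering object in $V$ whose entries have order type $<\kappa$. Using the mouse capturing hypothesis, the sets $K(\alpha,\beta)$ cohere well enough to define a $\square_\kappa$-like sequence, or a nonreflecting object, contradicting $PFA$ (Todorcevic's theorem that $PFA$ refutes $\square_\kappa$ for all uncountable $\kappa$).
\item \emph{Case $\Theta^D<\kappa^+$.} Then $cof(\Theta^D)=\kappa$ exactly. Here we use the good hulls $X$ from the proof of Theorem \ref{cof at least kappa}. Since $cof(\Theta)=\kappa$ is regular, stationarily many $\kappa_X$ of cofinality $\omega_1$ yield $D_X$ with $cof(\Theta_X)$ reflecting. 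Running the argument of Lemma \ref{fullness} and the game $G^\sigma_T$ with $X$ now closed only under $\omega$-sequences gives a tree for $U^c$ in $V(\R^*)$. This would make $U$ Suslin-co-Suslin in $D$, a contradiction. $PFA$ (via $MRP$, or via the failure of $\square$ for $\kappa$) supplies the needed stationary set of internally approachable hulls $X$ whose traces compute the measures correctly.
\end{enumerate}
\end{enumerate}

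The main obstacle is the case analysis in step 3: extracting from $PFA$ a combinatorial principle at $\kappa^+$ that contradicts the definable covering structure of $D$. I expect the $\square_\kappa$ failure route, applied to the threads $C_\beta$ obtained from $f_\beta$, to be the most promising first attempt.
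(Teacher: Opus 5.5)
Your proposal has a genuine gap. Neither case contains a working argument, and the mechanism that actually produces the contradiction is missing.

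\textbf{Case (a).} Step 2 is left open, and your sketch could not close it. A matrix indexed by $\alpha<\kappa$ with $\kappa$ regular is a $\kappa$-covering matrix for $\kappa^+$, and $CP(\kappa^+,\kappa)$ is outright false: Viale's Lemma 14 forces $\lambda<\kappa$. Meanwhile $CP(\kappa^+,\omega)$ is irrelevant, because $\R^*$ is not a countable union of the $\R^{V[G\upharpoonright\alpha]}$ when $\kappa$ is regular. Moreover, conditions (3)--(4) of a covering matrix are far weaker than square-coherence, and nothing in mouse capturing upgrades them. The paper does not need Case (a) at all. Under $D\models V=L(Lp^\Sigma(\R^*))$ it cites \cite{derivedmodelpfa} for $\Theta<\kappa^+$, so Theorem \ref{cof at least kappa} gives exactly $cof(\Theta)=\kappa$.

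\textbf{Case (b).} This is the real content of the theorem, and it cannot be handled by re-running the hull argument. Lemma \ref{fullness} and the countability of $\sigma$ need a hull $X$ cofinal in $\Theta$ with $X\cap\kappa=\kappa_X<\kappa$. But if $cof(\Theta)=\kappa$, elementarity puts an increasing cofinal $h:\kappa\to\Theta$ into $X$, whence $X\cap\Theta\subseteq\sup h''\kappa_X<\Theta$. That is exactly why the hull method stops at $cof(\Theta)\geq\kappa$. Appealing to internally approachable hulls supplied by $MRP$ does not change this.

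\textbf{The missing idea.} You need to use the fine structure of $D=L(Lp^\Sigma(\R^*))$ to manufacture a coherent sequence, and then let $PFA$ thread it.
\begin{enumerate}
\item By \cite{coherent} there is a coherent sequence $\langle C_\alpha:\alpha\in S\rangle$ on a club $S\subseteq\Theta$. It is ordinal definable from $\Sigma$ in $D$, hence lies in $V$. It comes with commuting fine-structural embeddings $\sigma_{\beta,\alpha}:D|\gamma_\beta\to D|\gamma_\alpha$ for $\beta\in C_\alpha$.
\item Since $cof(\Theta)=\kappa$, pull the sequence back along a continuous cofinal $f:\kappa\to S$ to a coherent sequence on the regular cardinal $\kappa$.
\item $PFA$ threads every such sequence by \cite{note_on_pfa}; the relevant principle is failure of $\square(\kappa)$, not $\square_\kappa$, which concerns $\kappa^+$. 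This gives a club thread $C'\subseteq\kappa$. Along the club $f''C'\subseteq\Theta$ the maps $\sigma_{\beta,\alpha}$ form a directed system.
\item The direct limit $N$ is a $\Sigma$-premouse over $\R^*$ projecting to $\R^*$ with $N\cap On>\Theta$. It is countably iterable in $V[G]$, because every countable hull of $N$ lies in the range of some $\sigma_\beta$ and each $D|\gamma_\beta$ is countably iterable.
\item Hence $N\triangleleft(Lp^\Sigma(\R^*))^{V[G]}$ but $N\notin D$. The scales-analysis argument of Claim \ref{scales analysis} then yields a new set of reals definable over $N$ that must belong to $D$ but cannot, a contradiction.
\end{enumerate}
Your proposal never identifies this coherent sequence, the thread, or the direct-limit mouse, and that is where the proof actually lives.
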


\begin{remark}
    If $D(V,\kappa)$ is sufficiently ``small'' and does not satisfy $AD_\R$, then $D(V,\kappa)$ satisfies $V = L(Lp^\Sigma(\R^*))$ $+$ ``super-small $\Gamma$-$\Sigma^*$-mouse capturing holds on a cone.''. So the theorem implies either the derived model is large or it satisfies $AD_\R$.
\end{remark}

\begin{proof}[Proof of Theorem \ref{derived model not Lp^Sigma}]
    Suppose not. Let $D = D(V,\kappa)$ and $\Theta = \Theta^D$. We have $D=L(Lp^\Sigma(\R^*))$ for some hod pair $(P,\Sigma)\in D$. As in \cite{derivedmodelpfa}, we may pick $(P,\Sigma)$ such that $P\in V$ and there is a symmetric term for $\Sigma$ in $V$.

    By the main result of \cite{coherent}, there is a club $S\subseteq \Theta$ and $\vec{C} = \langle C_\alpha:\alpha\in S\rangle$ such that
    \begin{enumerate}
        \item $C_\alpha \subseteq S$ and $C_\alpha$ is a closed subset of limit ordinals contained in $\alpha$.
        \item $cof(\alpha) >\omega \implies C_\alpha$ is unbounded in $\alpha$.
        \item $\beta \in C_\alpha \implies C_\beta = C_\alpha \cap \beta$.
    \end{enumerate}
    The proof of \cite{coherent} gives more. First, $\vec{C}$ is ordinal definable in $D$ from $\Sigma$, so $\vec{C}\in V$. Second, the construction assigns to each $\alpha \in \vec{C}$ some $\gamma_\alpha < \Theta$ such that for any $\beta\in C_\alpha$, there is a (fine-structural) embedding $\sigma_{\beta,\alpha}:D|\gamma_\beta\to D|\gamma_\alpha$. If additionally $\eta\in C_\beta$, then $\sigma_{\eta,\alpha} = \sigma_{\beta,\alpha}\circ \sigma_{\eta,\beta}$.

    As we are assuming $D\models V = L(Lp^\Sigma(\R))$, \cite{derivedmodelpfa} gives $\Theta < \kappa^+$. Then by Theorem \ref{cof at least kappa}, $cof(\Theta) = \kappa$. Let $f:\kappa\to S$ be strictly increasing, continuous, and cofinal in $\Theta$. For limit ordinals $\alpha < \kappa$, let $C'_\alpha = f^{-1}(C_{f(\alpha)})$. Consider $\alpha< \kappa$ of uncountable cofinality (in $V$). Since $f$ is continuous, $f''\alpha$ is a club in $f(\alpha)$, so $f''\alpha \cap C_{f(\alpha)}$ is unbounded in $f(\alpha)$. Thus $C'_\alpha$ is a club in $\alpha$. For $\alpha<\kappa$ with $cof(\alpha)=\omega$, if $C'_\alpha$ is not cofinal in $\alpha$, replace $C'_\alpha$ by some sequence of order-type $\omega$ cofinal in $\alpha$. We now have $C'_\alpha$ is a club in $\alpha$ for limit $\alpha <\kappa$. 
        
    Suppose $\beta$ is a limit point of $C'_\alpha$. We must be in the case $C'_\alpha = f^{-1}(C_{f(\alpha)})$. Then $f(\beta)$ is a limit point of $C_{f(\alpha)} \cap f''\beta$, so $C_{f(\beta)} = C_{f(\alpha)} \cap f(\beta)$. Then $f''\beta \cap C_{f(\beta)} = f''\beta \cap C_{f(\alpha)}$ is cofinal in $f(\beta)$, so $C'_\beta = f^{-1}(C_{f(\beta)})$. Thus $C'_\beta = C'_\alpha \cap \beta$.
        
    By Theorem 1 of \cite{note_on_pfa}, there is a club $C' \subset \kappa$ such that if $\alpha$ is a limit point of $C'$, then $C'\cap \alpha = C'_\alpha$. Let $C = f''[C']$. Clearly, $C$ is a club in $\Theta$ and $C\subseteq S$. Suppose $\alpha$ and $\beta$ are limit points of $C$, and $\beta < \alpha$. There are $\beta',\alpha'<\kappa$ limit points of $C'$ such that $\beta = f(\beta')$ and $\alpha = f(\alpha')$. Then $C'_\alpha \cap \beta = (C'\cap \alpha) \cap \beta = C'\cap \beta = C'_\beta$. Then $C_\alpha \supseteq f''C'_{\alpha'} \supseteq f''C'_{\beta'}$, so $C_\alpha$ is cofinal in $\beta$. So $\beta$ is a limit point of $C_\alpha$. In particular, $\beta\in C_\alpha$.

    We have constructed a club $C\subset \Theta$ such that for $\eta < \beta < \alpha$ limit points of $C$, $\sigma_{\eta,\alpha} = \sigma_{\beta,\alpha}\circ \sigma_{\eta,\beta}$. Let $N$ be the direct limit of the system of embeddings $\langle \sigma_{\beta,\alpha}: \beta< \alpha \text{ limit points of } C \rangle$. For $\beta$ a limit point of $C$, let $\sigma_\beta:D|\gamma_\beta \to N$ be the direct limit embedding. $N$ is an $\Sigma$-premouse over $\R^*$ which projects to $\R^*$.\footnote{This follows from that $\sigma_\beta$ is sufficiently elementary, which is immediate from that the embeddings $\sigma_{\beta,\alpha}$ are sufficiently elementary, which is also shown during the construction of the coherent sequence in \cite{coherent}.} $N$ is countably iterable in $V[G]$, since any countable $R\prec N$ is contained in $range(\sigma_\beta)$ for large enough $\beta$ and $D|\gamma_\beta$ is countably iterable in $V[G]$.

    We have shown $N\triangleleft (Lp^\Sigma(\R^*))^{V[G]}$. Since $N\cap On > \Theta$, $N\notin D$. This gives a contradiction exactly as in the proof of Claim \ref{scales analysis}.
\end{proof}

\printbibliography

\end{document}